\tikzset{> =stealth}
\tikzset{curve/.style={settings={#1},to path={(\tikztostart)
    .. controls ($(\tikztostart)!\pv{pos}!(\tikztotarget)!\pv{height}!270:(\tikztotarget)$)
    and ($(\tikztostart)!1-\pv{pos}!(\tikztotarget)!\pv{height}!270:(\tikztotarget)$)
    .. (\tikztotarget)\tikztonodes}},
    settings/.code={\tikzset{quiver/.cd,#1}
        \def\pv##1{\pgfkeysvalueof{/tikz/quiver/##1}}},
    quiver/.cd,pos/.initial=0.35,height/.initial=0}
\theoremstyle{plain}
\newtheorem{theorem}{Theorem}[section]
\newtheorem{lemma}[theorem]{Lemma}
\newtheorem{proposition}[theorem]{Proposition}
\theoremstyle{definition}
\newtheorem{definition}[theorem]{Definition}
\theoremstyle{remark}
\newtheorem{remark}[theorem]{Remark}
\newtheorem{example}[theorem]{Example}
\renewcommand{\epsilon}{\varepsilon}
\renewcommand{\phi}{\varphi}
\newcommand{\N}{\mathbb{N}}
\newcommand{\Q}{\mathbb{Q}}
\newcommand{\Set}{\mathbf{Set}}
\newcommand{\Frm}{\mathbf{Frm}}
\newcommand{\Loc}{\mathbf{Loc}}
\newcommand{\Topos}{\mathbf{Topos}}
\newcommand{\LocGrpd}{\mathbf{LocGrpd}}
\newcommand{\LH}{\mathbf{LH}}
\newcommand{\id}{\mathrm{id}}
\newcommand{\powerset}{\mathcal{P}}
\DeclareMathOperator{\Hom}{Hom}
\DeclareMathOperator{\Sh}{Sh}
\renewcommand{\O}{\mathcal{O}}
\newcommand{\Sub}{\mathrm{Sub}}
\newcommand{\cat}{\mathcal{C}}
\newcommand{\topos}{\mathcal{E}}
\newcommand{\ftopos}{\mathcal{F}}
\newcommand{\theory}{\mathbb{T}}
\newcommand{\PQtheory}{N \cdot \mathcal{PQ}_\mathbb{N}}
\newcommand{\PQtheorysing}{\mathcal{PQ}_\mathbb{N}}
\newcommand{\NOtheory}{N \cdot \mathbb{O}}
\newcommand{\Otheory}{\mathbb{O}}
\newcommand{\Mod}{\mathrm{Mod}}
\newcommand{\Desc}[2]{\mathbf{Desc}_{{#1}}({#2}_\bullet)}
\newcommand{\class}[1]{\llbracket {#1} \rrbracket}
\title{The representing localic groupoid for a geometric theory}
\author[G. Manuell]{Graham Manuell}
\address{Department of Mathematical Sciences, Stellenbosch University, South Africa}
\email{graham@manuell.me}
\thanks{The first author acknowledges financial support from the Centre for Mathematics of the University of Coimbra (UIDB/00324/2020, funded by the Portuguese Government through FCT/MCTES)}
\author[J. L. Wrigley]{Joshua L. Wrigley}
\address{DISAT, Univeristy of Insubria, Como, Italy}
\email{josh.l.wrigley@gmail.com}
\thanks{The second author acknowledges the financial support of the Insubria-Huawei studentship into ``Grothendieck toposes for information and computation''}
\date{April 2024}
\subjclass[2020]{18F10, 22A22, 03G30, 06D22, 18B25}
\keywords{Grothendieck topos, classifying topos, frame, semantic groupoid}
\begin{document}

\begin{abstract}
 We give an expository, and hopefully approachable, account of the Joyal--Tierney result that every topos can be represented as a topos of sheaves on a localic groupoid. We give an explicit presentation of a representing localic groupoid for the classifying topos of a given geometric theory and discuss links with the topological groupoids of Forssell.
\end{abstract}

\maketitle
\thispagestyle{empty}

\setcounter{section}{-1}
\section{Introduction}

In \cite{joyal1984galois} Joyal and Tierney famously proved that every topos can be represented as a topos of equivariant sheaves on a localic groupoid. This provides a helpful perspective from which to understand topos theory. However, possibly due to the level of abstraction involved in the proof, many people \cite{se2728171,mo84197,mo102652} seem to be unsure of exactly how to construct such a localic groupoid in concrete cases.
The aim of this paper is to show that it is relatively straightforward to write down a localic groupoid that represents a topos directly from the geometric theory the topos classifies.

Topos theory is a powerful mathematical framework which unifies topology and logic in the language of category theory. In particular, every geometric theory has an associated \emph{classifying topos} that encodes information about the models of the theory, not only in $\Set$, but in every topos. The representation theorem of Joyal and Tierney can be understood as showing that a topos can be viewed as a topological space (in its `point-free' incarnation) together with additional automorphisms (as given by the structure of a groupoid). Thus, we might compare toposes to orbifolds from differential geometry.

While topos theory has a reputation for being difficult at times, we have attempted to make this paper approachable to non-experts. For this reason we do not include all the technical details from \cite{joyal1984galois}. Instead we try to give intuition for the essential ideas and hope to show that the main ideas behind the proof of the representation theorem are not as difficult as one might fear.

The explicit localic groupoid we construct is slightly different from the one found in \cite{joyal1984galois}, instead coinciding with the construction briefly sketched in \cite[Remark C5.2.8(c)]{Elephant} (see \cref{rem:multipleopencovers}).  Another exposition of the same construction from the perspective of mathematical logic is given in Paper III of Lindberg's Ph.D.\ thesis \cite{lindberg}.\footnote{Though the paper is not (yet) publicly available.} Our paper focuses on giving a hands-on procedure for turning a geometric theory into a presentation for the corresponding localic groupoid, which has only been implicit in prior work.

In \cref{sec:background} we provide a brief introduction to the concepts needed to understand the paper. In particular, we discuss locales, internal groupoids, classifying toposes and sheaves.

In \cref{sec:syndecr} we give an explicit description for a localic groupoid associated to the classifying topos of a geometric theory by coding models as subquotients of $\N$. It is perhaps surprising that the we do not need to consider any larger models, but this is a consequence of the localic nature of the construction.

An overview of the proof of the representation theorem from \cite{joyal1984galois} is described in \cref{sec:joyaltierney}. In \cref{sec:proof} we show how this construction yields the localic groupoid described in \cref{sec:syndecr}.

The remainder of \cref{sec:proof} is devoted to applications of our explicit description of the representing groupoid.  We calculate explicit descriptions of the left adjoints to the source and target maps in \cref{sec:leftadjoints}, and use these to demonstrate in \cref{sec:isoclasses} that the opens in the `locale of isomorphism classes' are precisely the sentences of the theory (up to provable equivalence).

In \cref{sec:countable}, we show that the localic groupoid we construct is spatial when the theory is countable.  Some readers may prefer working with topological groupoids to locales. We compare the resulting topological groupoid to the construction found in \cite{forssell2012}.

\section{Background}\label{sec:background}

We begin by recalling some background information.

\subsection{Locales}

A topological space is given by a set of points and a lattice of open sets. In the pointfree approach to topology a space is described by its lattice of open sets alone and the points are derived from this.
We will give a brief introduction to this approach. For more details see \cite[Chapters II and III]{PicadoPultr}.

\begin{definition}
 A \emph{frame} is a complete lattice satisfying the distributivity law \[a \wedge \bigvee_\alpha b_\alpha = \bigvee_\alpha a \wedge b_\alpha.\]
\end{definition}
Frames are (infinitary) algebraic structures with constants $0$ and $1$, a binary operation $\wedge$, and a join operation $\bigvee$ for each cardinality. Frame homomorphisms are maps that preserve finite meets and arbitrary joins.

Note that a topological space is simply a set $X$ together with a subframe of the powerset $\powerset{X}$. A continuous map of topological spaces induces a frame homomorphism between these frames of open sets \emph{in the opposite direction} by taking preimages. In general, we call elements of a frame \emph{opens}.

The category $\Loc$ of \emph{locales} is the opposite of the category $\Frm$ of frames and frame homomorphisms. A locale is simply a frame, but the direction of locale morphisms emphasises their geometric nature by agreeing with the direction of continuous maps of topological spaces. We will maintain a notational distinction between a locale $X$ and its frame of opens $\O X$.
If $f\colon X \to Y$ is a locale morphism, we write $f^*\colon \O Y \to \O X$ for the corresponding frame homomorphism.
Since $f^*$ preserves arbitrary joins, it has a right adjoint $f_*\colon \O X \to \O Y$.

A \emph{point} of a locale $X$ is given by a locale morphism from the terminal locale $1$ (represented by the one-point space) to $X$. In good cases, the abstract points of a locale obtained from a topological space recover the concrete points of the space itself. On the other hand, not every locale arises from a topological space (see \cref{ex:partial_surjections_from_N}). A locale that does come from a topological space is called \emph{spatial}.

As with other algebraic structures, frames can be presented by generators and relations. Such presentations can also be given a logical interpretation as the Lindenbaum--Tarski algebras for a certain kind of infinitary propositional logic called \emph{geometric logic}, which has finite conjunctions and infinitary disjunctions.
Let us consider an example.
\begin{example}[The Dedekind reals]\label{ex:dedekindreals}
Recall that a Dedekind cut on $\Q$ is given by a pair $(L,U)$ of subsets of $\Q$ satisfying certain axioms. Such a pair represents a (unique) real number that is larger than the rationals in the `lower cut' $L$ and smaller than the  rationals of the `upper cut' $U$.  
The theory of Dedekind cuts can be expressed in geometric logic by taking an atomic proposition with the (suggestive, but purely formal) name $[p \in L]$ for each $p \in \Q$, an atomic proposition $[p \in U]$ for each $p \in \Q$ and the following axioms.
\begin{displaymath}
\begin{array}{r@{\hspace{1.5ex}}c@{\hspace{1.5ex}}l@{\quad}@{}l@{\qquad\quad}r@{}}
  {[q \in L]} &\vdash& {[p \in L]} & \text{ for $p \le q$} & \text{($L$ downward closed)} \\
  {[q \in L]} &\vdash& \bigvee_{p > q} {[p \in L]} & \text{ for $q \in \Q$} & \text{($L$ rounded)} \\
   &\vdash& \bigvee_{q \in \Q} {[q \in L]} && \text{($L$ inhabited)} \\
  {[p \in U]} &\vdash& {[q \in U]} & \text{ for $p \le q$} & \text{($U$ upward closed)} \\
  {[q \in U]} &\vdash& \bigvee_{p < q} {[p \in U]} & \text{ for $q \in \Q$} & \text{($U$ rounded)} \\
   &\vdash& \bigvee_{q \in \Q} {[q \in U]} && \text{($U$ inhabited)} \\
  {[p \in L]} \land {[q \in U]} &\vdash& \bot & \text{ for $p \ge q$} & \text{($L$ and $U$ disjoint)} \\
   &\vdash& {[p \in L]} \lor {[q \in U]} & \text{ for $p < q$} & \text{(locatedness)}
\end{array}
\end{displaymath}
Note a sequent $\phi \vdash \psi$ is interpreted as saying that $\psi$ holds whenever $\phi$ does. If $\phi$ is missing it is understood to be $\top$ (i.e.\ true). Sequents are necessary since the logic does not contain an implication connective.
A model of such a theory assigns a truth value to each basic proposition such that the sequents are satisfied. In this case, such a model gives a Dedekind cut $(L,U)$ where $L$ is the set of $p \in \Q$ for which $[p \in L]$ is true and $U$ is the set of $p \in \Q$ for which $[p \in U]$ is true.
\end{example}

A propositional geometric theory yields a frame presentation by simply taking the basic propositions to be generators and each axiom $\phi \vdash \psi$ to be a relation $\phi \le \psi$ (or the equivalent equation $\phi \wedge \psi = \phi$). We say that the corresponding locale \emph{classifies} the geometric theory. The universal property of the presentation ensures that models of the geometric theory correspond to points of the locale, since the $\O 1$ is the frame of truth values $\{0,1\}$.
The classifying locale for the theory of Dedekind cuts on $\Q$ is the locale of real numbers (with their usual topology). The propositions $[p \in L]$ and $[p \in U]$ correspond to the opens $(p, \infty)$ and $(-\infty, p)$ respectively.

As a second example consider the following more unusual theory.
\begin{example}[Partial surjections from $\N$ to $X$]\label{ex:partial_surjections_from_N}
Fix a set $X$ and consider the following geometric theory of partial surjections from $\N$ to $X$. There is a basic proposition denoted by $[f(n) = x]$ for each $n \in \N$ and $x \in X$, which is of course interpreted to mean that the partial function maps $n$ to $x$. The axioms are as follows.\footnote{For simplicity we have assumed classical logic for this definition. Constructively, the functionality axiom would instead be $[f(n) = x] \land [f(n) = y] \vdash \bigvee \{\top \mid x = y\}$ for all $x,y \in X$.}
\begin{displaymath}
\begin{array}{r@{\hspace{1ex}}c@{\hspace{1ex}}l@{\quad}@{}l@{\quad}r@{}}
  [f(n) = x] \land [f(n) = y] &\vdash& \bot & \text{for $n \in \N$, $x,y \in X$, $x \neq y$} & \text{(functionality)} \\
   &\vdash& \bigvee_{n \in \N} [f(n) = x] & \text{for $x \in X$} & \text{(surjectivity)}
\end{array}
\end{displaymath}

If $X$ is countable, the resulting locale is not so strange. However, if $X$ is $\powerset \N$, say, then there are no surjections from $\N$ to $X$. Nonetheless, the locale is nontrivial (see \cite[Example C1.2.8]{Elephant}). Thus, this locale is wildly non-spatial.
\end{example}

Many notions from topology have analogues for locales. In particular, we can define a notion of open locale maps.
\begin{definition}
  A locale morphism $f\colon X \to Y$ is \emph{open} if its associated frame homomorphism $f^*\colon \O Y \to \O X$ has a left adjoint $f_!\colon \O X \to \O Y$ that satisfies the so-called \emph{Frobenius reciprocity} condition:
  $f_!(f^*(u) \wedge v) = u \wedge f_!(v)$.
  
  If $f^* f_! = \id_{\O X}$ (or equivalently $f^*$ is surjective), we say $f$ is an \emph{open sublocale inclusion}.
  If $f_! f^* = \id_{\O Y}$ (or $f^*$ is injective), we say $f$ is an \emph{open surjection}.
\end{definition}

The left adjoint can be understood as giving the images of opens of $X$ under $f$. Open sublocales of $X$ are in bijection with the elements of the frame $\O X$.
Open maps of locales are stable under composition and pullback, and pulling back open sublocales along a map $h$ agrees with the action of the frame homomorphism $h^*$.

\begin{definition}
  A locale map $f\colon X \to Y$ is a \emph{local homeomorphism} if it is open and so is the `diagonal' map $\delta\colon X \to X \times_Y X $, whose codomain is given by the pullback
  \[
  \begin{tikzcd}
      X \times_Y X \ar{r}{\pi_1} \ar{d}[']{\pi_2} \arrow[dr, phantom, "\lrcorner", very near start]
 & X \ar{d}{f} \\
      X \ar[swap]{r}{f} & Y,
  \end{tikzcd}
  \]
  and which satisfies $\pi_1 \delta = \pi_2 \delta = \id_X$.
\end{definition}
It is easy to see that open sublocale inclusions are local homeomorphisms.
Local homeomorphisms can also be equivalently defined in terms of restricting to open embeddings on an open cover of the domain, in a similar way to how they often are for topological spaces.

\subsection{Topos theory}

It is difficult to summarise what topos theory \emph{is} due to the plethora of perspectives on the subject (the eponymous `sketches of an elephant' of \cite{Elephant}).  A (Grothendieck) topos can either be defined as a category satisfying the abstract \emph{Giraud axioms} (see \cite[Theorem A1.1]{SGL}), or the category of \emph{sheaves on a site} --- that is, a category $\topos$ which embeds as a left-exact-reflective subcategory of some category of presheaves $\Set^{\cat^{op}}$ (i.e.\ a subcategory whose inclusion has a finite-limit-preserving left adjoint $\Set^{\cat^{op}} \to \topos$). The specific embedding $\topos \hookrightarrow \Set^{\cat^{op}}$ is not included as part of the defining data of $\topos$. Indeed, many different sites can present the same topos.

On the surface, the formal definitions of a topos do not appear that exciting.
However, the many desirable properties possessed by toposes lend themselves to other perspectives on the subject.  For example, toposes behave as mathematical universes --- they have a powerful internal language that can interpret constructive mathematics.

For this paper, two aspects of topos theory will prove important: topos theory as a syntax invariant approach to model theory (discussed in \cref{sec:classifying}), and topos theory as a generalisation of locale theory.

\begin{example}[Sheaves on a space]
A fundamental example of a topos is the \emph{topos of sheaves on a locale} $X$, denoted by $\Sh(X)$.  This is the slice category $\LH/X$, where $\LH \subseteq \Loc$ is the category of locales and local homeomorphisms.  A topos of the form $\Sh(X)$ is said to be \emph{localic}.
\end{example}

Given a topos $\topos$ and an object $E \in \topos$, the subobjects of $E$ (i.e.\ equivalence classes of monomorphisms $U \hookrightarrow E$) form a frame $\Sub_\topos(E)$ (see \cite[Proposition III.8.1]{SGL}).  For each arrow $g \colon E \to E'$ of $\topos$, the map $g^\ast \colon \Sub_\topos(E') \to \Sub_\topos(E)$, given by taking pullbacks of subobjects along $g$, is a frame homomorphism (see \cite[Proposition III.8.2]{SGL}).

The morphisms of toposes we consider are \emph{geometric morphisms}.  A geometric morphism between two toposes $f \colon \ftopos \to \topos$ is an adjoint pair of functors
\[\begin{tikzcd}
	\ftopos & \topos
	\arrow[""{name=0, anchor=center, inner sep=0}, "{f_\ast}"', shift right=2, from=1-1, to=1-2]
	\arrow[""{name=1, anchor=center, inner sep=0}, "{f^\ast}"', shift right=2, from=1-2, to=1-1]
	\arrow["\dashv"{anchor=center, rotate=-90}, draw=none, from=1, to=0]
\end{tikzcd}\]
such that the left adjoint $f^\ast$ preserves finite limits (in addition to colimits).  The left adjoint is commonly called the \emph{inverse image} functor, while the right adjoint is called the \emph{direct image} functor.

\begin{example}
    If $\topos$ is a left-exact-reflective subcategory of $\Set^{\cat^{op}}$, then the adjoint pair
    \[\begin{tikzcd}
	\topos & \Set^{\cat^{op}}
	\arrow[""{name=0, anchor=center, inner sep=0}, hook, shift right=2, from=1-1, to=1-2]
	\arrow[""{name=1, anchor=center, inner sep=0}, shift right=2, from=1-2, to=1-1]
	\arrow["\dashv"{anchor=center, rotate=-90}, draw=none, from=1, to=0]
\end{tikzcd}\]
is a geometric morphism.
\end{example}

Each locale morphism $f \colon X \to Y$ induces a geometric morphism $\Sh(f) \colon \Sh(X) \to \Sh(Y)$, whose inverse image part we write as $f^*$. It sends sends a local homeomorphism $q \colon W \to Y$ to its pullback along $f$
\[
\begin{tikzcd}
    f^\ast(W)  \arrow[dr, phantom, "\lrcorner", very near start] \ar{r} \ar{d} & W \ar{d}{q} \\
    X \ar[swap]{r}{f} & Y,
\end{tikzcd}
\]
and a morphism $g$ of $\Sh(Y)$ to the induced map
\[
\begin{tikzcd}[row sep = small, column sep = small]
    f^\ast(W') \ar[dashed]{dr}{f^\ast(g)} \ar{rr} \ar[bend right]{rddd} && W' \ar{dr}{g} \ar[bend right]{rddd} & \\
         & f^\ast(W) \ar[crossing over]{rr}  \arrow[ddrr, phantom, "\lrcorner",pos= 0 ] \ar{dd} && W \ar{dd}{q} \\
    \\
    & X \ar[swap]{rr}{f} && Y.
\end{tikzcd}
\]
Note that this agrees with the definition of $f^*$ as a frame homomorphism if we conflate opens and open sublocales.

In fact, locale theory can be reinterpreted inside topos theory via the full and faithful functor
$\Sh \colon \Loc \hookrightarrow \Topos$
(see \cite[\S IX]{SGL}).  The functor $\Sh$ has a left adjoint, the \emph{localic reflection}, which sends a topos $\topos$ to the frame $\Sub_\topos(1)$ of subobjects of the terminal object of $\topos$ (see \cite[Proposition A4.6.12]{Elephant}).

\subsection{Properties of geometric morphisms}\label{sec:propertiesofgeomorph}

Many properties of locale morphisms generalise to properties of geometric morphisms.  For example, a morphism $f \colon X \to Y$ of locales is open (and surjective) if and only if the corresponding geometric morphism $\Sh(f) \colon \Sh(X) \to \Sh(Y)$ is open (and surjective) in the following sense.

\begin{definition}
    A geometric morphism $f \colon \ftopos \to \topos$ is \emph{open} if, for each object $E \in \topos$, the induced frame homomorphism on subobjects
    \[f^\ast_E \colon \Sub_\topos (E) \to \Sub_\ftopos (f^\ast(E))\]
    has a left adjoint $f_!^E$ and this left adjoint is natural in $E$ in the sense that, for each arrow $g \colon E \to E'$, the square
    \[
    \begin{tikzcd}
        \Sub_\topos(E')  \ar{d}[']{\Sub_\topos(g)} & \Sub_\ftopos(f^\ast(E')) \ar{l}[']{f_!^{E'}} \ar{d}{\Sub_\ftopos(f^\ast(g))} \\
        \Sub_\topos(E) & \Sub_\ftopos(f^\ast(E)) \ar{l}[']{f_!^E}
    \end{tikzcd}
    \]
    commutes. (Note that in particular, by choosing $g \colon E \hookrightarrow E'$ to be a monomorphism, we can show that $f_!^{E'}$ satisfies Frobenius reciprocity and so $f^*_{E'}$ is open.) Moreover, the open geometric morphism $f$ is said to be \emph{surjective} if $f^\ast$ is a faithful functor.
\end{definition}
As is the case for the analogous class of locale morphisms, open (surjective) geometric morphisms are stable under pullback (see \cite[Proposition VII.1.3]{joyal1984galois} or \cite[Theorem 4.7]{johnstone1980}).

The geometric morphism $\Sh(f) \colon \Sh(X) \to \Sh(Y)$ induced by a locale map $f \colon X \to Y$ has the property that every object in $\Sh(X)$ is a \emph{subquotient} of something in the inverse image --- that is, for all $F \in \Sh(X)$ there is a diagram:
\[
\begin{tikzcd}
    F & S \ar[two heads]{l} \ar[hook]{r} & f^\ast(E).
\end{tikzcd}
\]
Such geometric morphisms are called \emph{localic}, so-named because a topos $\topos$ is localic if and only if the (necessarily unique) geometric morphism $\gamma \colon \topos \to \Set$ is a localic geometric morphism.

Let us see why $\gamma \colon \Sh(X) \to \Set$ is a localic geometric morphism for a locale $X$. Each object $q \colon Y \to X$ of $\Sh(X)$ is a local homeomorphism, and so $Y$ is covered by a collection $(s_i)_{i \in I}$ of local sections of $q$,
\[
\begin{tikzcd}
    & Y \ar{d}{q} \\
    U_i \ar[hook]{r} \ar{ru}{s_i} & X.
\end{tikzcd}
\]
Thus, in $\Sh(X)$ there is a diagram
\[
\begin{tikzcd}
    Y \ar[bend right]{rd} & \ar[two heads]{l}[']{\overline{s}} \coprod_{i \in I} U_i \ar[hook]{r} \ar{d} & \coprod_{i \in I} X \ar[bend left = 1.9em]{ld} &[-35pt] \cong \gamma^\ast(I)  \\
    & X &
\end{tikzcd}
\]
as required.

We highlight two important facts about localic geometric morphisms.
\begin{enumerate}
    \item Localic geometric morphisms are stable under pullback in $\Topos$ (see \cite[Proposition 2.1]{johnstone1981}).
    \item Localic geometric morphisms are closed under composition (see \cite[Lemma 1.1]{johnstone1981}). So if $h \colon \mathcal{H}' \to \mathcal{H}$ is a localic geometric morphism and $\mathcal{H}$ is a localic topos, then $\mathcal{H}'$ is a localic topos too.
\end{enumerate}

Geometric morphisms into toposes can be specified by internal structures in the codomain topos.
A geometric morphism $f \colon \ftopos \to \topos$ is localic if and only if $\ftopos$ is the topos of \emph{internal sheaves for an internal locale} (see \cite[Theorem 5.34]{johnstone1977}).  We won't seek to make sense of the phrase `internal sheaves for an internal locale' here, but a precise formulation can be found in \cite{caramello2022} or \cite{wrigley2023}.

\subsection{Classifying toposes}\label{sec:classifying}

Propositional geometric logic has a first-order generalisation which involves not just propositions, but also sorts and relation symbols. First-order geometric logic is the fragment of infinitary predicate logic that includes finite conjunction, infinitary disjunction, an equality predicate and existential quantification, i.e.\ the symbols $\land$, $\bigvee$, $=$ and $\exists$. Furthermore, sequents are now equipped with \emph{contexts} which, at a minimum, contain the free variables of the formulae inside a sequent (equipped with their types).

\begin{remark}
  First-order geometric theories are often defined to also allow function symbols. However, these can always be defined as binary relations together with functionality and totality axioms. We will nonetheless occasionally find it convenient to use function notation for such relations.
\end{remark}

\begin{example}\label{ex:linear_order}
An example of such a theory is that of inhabited total orders. This consists of a single sort $X$ together with a binary relation ${\le} \subseteq X \times X$ satisfying the following axioms.
\begin{displaymath}
\begin{array}{r@{\hspace{1.5ex}}l@{\hspace{1.5ex}}l@{\quad}@{\quad}r@{}}
  &\vdash_{x,y,z\colon X}& x \le  x & \text{(reflexivity)} \\
  x \le y \land y \le z &\vdash_{x,y,z\colon X}& x \le  z & \text{(transitivity)} \\
  x \le y \land y \le x &\vdash_{x,y\colon X}& x = y & \text{(antisymmetry)} \\
  &\vdash_{x,y\colon X}& x \le y \lor y \le x  & \text{(totality)} \\
  & \vdash_{\emptyset} & \exists x \colon X. \ \top & \text{(inhabitedness)}
\end{array}
\end{displaymath}
Note that, just as with \cref{ex:dedekindreals}, we must use sequents since geometric logic does not contain an implication or universal quantification symbol.  A sequent $\varphi \vdash_{\vec{x}} \psi$ is understood as expressing ``for all $\vec{x}$, $\varphi$ implies $\psi$''.
\end{example}

Propositional geometric theories can be understood as the special case of first-order geometric theories where there are no sorts. Basic propositions are simply understood as nullary relations.

Just as there is a classifying locale for every propositional geometric theory, there is a \emph{classifying topos} for a general geometric theory (see \cite[Proposition D3.1.12]{Elephant} or \cite[Theorem 2.1.10]{TST}). The classifying topos for a geometry theory $\theory$ is written $\Set[\theory]$ and satisfies the universal property
\[\Hom(-, \Set[\theory]) \cong \Mod_{\theory}(-)\]
where $\Mod_{\theory}(\topos)$ denotes the category of $\theory$-models in the topos $\topos$. Moreover, every topos classifies some geometric theory (see \cite[Remark D3.1.13]{Elephant} or \cite[Theorem 2.1.11]{TST}). A $\theory$-model in an arbitrary topos $\ftopos$ consists of an object $X^M$ for each sort $X$ and, for each relation symbol, a subobject $R^M \hookrightarrow X_1^M \times \, \dots \, \times X_n^M$ such that the axioms of $\theory$ are satisfied in the following sense.  From our basic relation symbols, we can construct the interpretation of each geometric formula, and a sequent $\varphi \vdash_{\vec{x}} \psi$ is satisfied if $\varphi^M \le \psi^M$ as subobjects of $X_1^M \times \, \dots \, \times X_n^M$.  See \cite[\S D1]{Elephant} for more details.

The topos $\Set$ classifies the theory with no sorts, no symbols, and no axioms.  If $\theory$ is a propositional theory (i.e.\ there are no sorts), then the classifying topos $\Set[\theory]$ is simply the topos of sheaves on the classifying locale of $\theory$.  Thus, the geometric morphism $\Set[\theory] \to \Set$ is localic.

More generally, if $\theory$ is a theory with $N$ sorts, then $\Set[\theory]$ is the topos of sheaves for an internal locale of $\Set[\NOtheory]$ --- that is, there exists a localic geometric morphism $L \colon \Set[\theory] \to \Set[\NOtheory]$, where $\NOtheory$ denotes the first-order theory with $N$ sorts, no relations or functions, and no axioms. This is demonstrated for $N = 1$ in \cite[Theorem D3.2.5]{Elephant} (this appears in \cite{joyal1984galois} in entirely categorical terms as Proposition VII.3.1) or, for arbitrary $N$, in \cite[Definition 7.1.1 \& Theorem 7.1.3]{TST}. Thus, in a certain expanded sense, every topos is a `localic' topos\footnote{Amusingly, the slogan \emph{all topoi are localic} was used by Freyd as the title of his paper \cite{freyd}, but for a \emph{different} sense in which every topos is `localic'!}.

\subsection{Equivariant sheaves on a groupoid}
A localic groupoid is a groupoid internal to $\Loc$, just as a topological group is an internal group in the category of topological spaces.

\begin{definition}
A \emph{localic groupoid} $G$ is a diagram in $\Loc$ of the form
    \[\begin{tikzcd}
 G_1 \times_{G_0} G_1 \ar[shift left = 4]{r}{\pi_{2}} \ar{r}{m} \ar[shift right = 4]{r}{\pi_{1}} & G_1  \ar[loop, distance=2em, in=305, out=235, "i"'] \ar[shift left = 4]{r}{t} \ar[shift right = 4]{r}{s} & \ar{l}[']{e} G_0 ,
\end{tikzcd}\]
such that the equations
\[s \circ e = t \circ e = \id_{G_0},\]
\[s \circ m = s \circ \pi_1, \ \ t \circ m = t \circ \pi_2,\]
\[m \circ (\id_{G_1} \times_{G_0} m) = m \circ (m \times_{G_0} \id_{G_1}), \]
\[m \circ (e \circ s, \id_{G_1}) = \id_{G_1} = m \circ (\id_{G_1}, e \circ t),\]
\[s \circ i = t, \ \ t \circ i = s, \]
\[m \circ (\id_{G_1}, i) = e \circ s, \ \  m \circ (i, \id_{G_1}) = e \circ t\]
all hold.
\end{definition}

Intuitively, these conditions express that $G_0$ is the \emph{locale of objects} and $G_1$ is the \emph{locale of arrows} of a category in which every arrow is invertible. The `source' map $s$ and `target' map $t$ assign arrows to their domain and codomain respectively. The map $e$ picks out the identity arrow of an object. The map $m$ gives the composites of composable pairs, while $i$ yields the inverse of each arrow.  The equations imposed on a localic groupoid express this interpretation, e.g. the equation $s \circ e = t \circ e = \id_{G_0}$ says that the source and target of the identity arrow on an object $x \in G_0$ is $x$, as we would expect.  Of course, a similar definition in $\Set$ would give the usual notion of a small groupoid.

\begin{definition}
A localic groupoid is said to be \emph{open} if $s$ and $t$ are both open maps.  We note that since $s \circ i = t$ and $i$ is an isomorphism, $t$ is open if and only if $s$ is.
\end{definition}

\begin{example}\label{ex:grpdexs}
Let us consider four important classes of examples of localic groupoids.
\begin{enumerate}[start=0]
    \item Every small groupoid gives localic groupoid by viewing the sets of objects and morphisms as spaces with the discrete topology. We might call these \emph{topologically discrete} groupoids.
    \item For each locale $X$,
    \begin{equation}\label{example1}
        \begin{tikzcd}
    X \ar{r}{\id_X} \ar[shift left = 4]{r}{\id_X} \ar[shift right = 4]{r}{\id_X} & X \arrow[loop,  "\id_X "', distance=2em, in=305, out=235] \ar[shift left = 4]{r}{\id_X} \ar[shift right = 4]{r}{\id_X} & X \ar{l}[']{\id_X}
    \end{tikzcd}
    \end{equation}
    is a localic groupoid. This can be viewed as the `discrete' category on the locale of objects $X$. We call this a \emph{categorically discrete} groupoid.
    \item Let $(G,m,e)$ be a localic group.  The diagram
    \begin{equation}\label{example2}
        \begin{tikzcd}
G \times G \ar[shift left = 4]{r}{\pi_1} \ar{r}{m} \ar[shift right = 4]{r}{\pi_2} & G  \arrow[loop,  "i "', distance=2em, in=305, out=235] \ar[shift left = 3]{r}{!} \ar[shift right = 3]{r}{!} & 1 \ar{l}[']{e},
\end{tikzcd}
    \end{equation}
defines a localic groupoid. This is exactly like how a group in $\Set$ can be viewed as a one-object category.
    \item If $G$ is a localic group acting continuously on a locale $X$ by $\alpha \colon G \times X \to X$, then the diagram
    \begin{equation}\label{example3}
        \begin{tikzcd}
G \times G \times X \ar[shift left = 5]{r}{\pi_{1,3}} \ar{r}{m \times \id_X } \ar[shift right = 4]{r}{\pi_{2,3}} & G \times X  \arrow[loop,  "i \times \id_X "', distance=2em, in=305, out=235] \ar[shift left = 5]{r}{\alpha} \ar[shift right = 4]{r}{\pi_2} & X \ar{l}[']{(e, \id_X)}.
\end{tikzcd}
    \end{equation}
is a localic groupoid.
\end{enumerate}
\end{example}

A \emph{sheaf} $Y$ for a localic groupoid $G$ is a local homeomorphism $q \colon Y \to G_0$ together with a compatible $G_1$-action (though we will often omit the map $q$ and the action from our notation). A $G_1$-action is a locale map $\beta \colon Y \times_{G_0} G_1 \to Y$, where $Y \times_{G_0} G_1$ is the pullback of $q$ and the source morphism $s\colon G_1 \to G_0$, such that the equations
\begin{align*}
  q(\beta(y,g)) &= t(g), \\
  \beta(\beta(y,g),h) &= \beta(y,m(g,h)), \\
  \beta(y,e(q(y))) &= y
\end{align*}
hold in the internal logic of $\Loc$.
Note that although the locales $Y$, $G_1$ and $G_0$ may not be spatial, we are able to reason in a suggestive `point-set' theoretic manner.  This is explained further in \cref{sec:points} below.

A \emph{morphism of sheaves} is an `equivariant morphisms of bundles' --- that is, a locale morphism $f \colon Y \to Y'$ such that
\[q'(f(y)) = q(y) \ \text{ and }\ f(\beta(y,g)) = \beta'(f(y),g)\]
hold in the internal logic.

\begin{definition}
We denote the category of sheaves and morphisms of sheaves on a localic groupoid $G$ by $\Sh(G)$.
\end{definition}

The category $\Sh(G)$ is a topos by \cite[Proposition 5.2]{moerdijk1988}. We say that an arbitrary topos $\topos$ is \emph{represented} by the groupoid $G$ if there is an equivalence $\topos \simeq \Sh(G)$.

\begin{example}\label{exsrevisited}

We revisit the example groupoids of \cref{ex:grpdexs} and describe the resulting sheaf toposes.
\begin{enumerate}[start=0]
    \item The topos of sheaves on a small groupoid $G$ (viewed as a topologically discrete localic groupoid) is essentially the category of discrete opfibrations over $G$ and is therefore equivalent to the functor category $\Set^G$.
    \item The topos of sheaves on the groupoid (\ref{example1}) is the familiar topos of sheaves $\Sh(X)$ on $X$.
    \item The topos of sheaves on the groupoid (\ref{example2}) is the topos $\mathbf{B} G$ of discrete sets with a continuous action by $G$ and equivariant maps between these. (See \cite[\S III.9]{SGL} for description in terms of topological groups. The localic case is similar.)
    \item The topos of sheaves on the groupoid (\ref{example3}) is the topos of $G$-equivariant sheaves over $X$, as seen in \cite{freyd} and \cite[Proposition A4.6]{SGL}.
\end{enumerate}
\end{example}

The objects and morphisms of $\Sh(G)$ can be given a more compact definition in terms of \emph{descent data} (the reasons for the nomenclature will become apparent in \cref{sec:descent}). A descent datum for $G$
is a pair consisting of a local homeomorphism $q \colon Y \to G_0$ and a morphism $\theta \colon s^\ast(Y) \to t^\ast(Y)$ such that
\[e^\ast(\theta) = \id_Y \text{ and } m^\ast(\theta) = \pi_2^\ast(\theta) \circ \pi_1^\ast(\theta).\] 
A morphism of descent data $f \colon (Y,\theta) \to (Y',\theta')$ is a commuting triangle
\[
\begin{tikzcd}[column sep = tiny]
    Y \ar{rd}[']{q}\ar{rr}{f} && Y' \ar{ld}{q'} \\
    & G_0 &
\end{tikzcd}
\]
(i.e.\ a morphism $f \colon Y \to Y'$ in $\Sh(G_0)$) such that the square
\[
\begin{tikzcd}
    s^\ast(Y) \ar{d}[']{s^\ast(f)} \ar{r}{\theta} & t^\ast(Y) \ar{d}{t^\ast(f)} \\
    s^\ast(Y') \ar[swap]{r}{\theta'} & t^\ast(Y')
\end{tikzcd}
\]
commutes.

That the two definitions of sheaves on $G$ are equivalent is a matter of unravelling definitions. The notational difference arises because, for descent data, we keep track of the arrow $f \in G_1$ once it has been applied to a point $y \in Y$.  Indeed, given a $G_1$-action $\beta \colon Y \times_{G_0} G_1 \to Y$, the corresponding descent datum is the map $\theta_\beta$ that sends the pair $(y,f) \in s^\ast(Y)$ to $(\beta(y,f),f) \in t^\ast(Y)$.  For completeness, we explain the equivalence between $G_1$-actions and descent data in detail in \cref{app:equivanddesc}.

\subsection{Morphisms of localic groupoids}

As established in \cite[\S 5.4]{moerdijk1988}, taking sheaves on a localic groupoid is a functorial construction with respect to homomorphisms of localic groupoids. Homomorphisms of localic groupoids are functors between internal categories.

\begin{definition}
    A \emph{homomorphism of localic groupoids} $\varphi \colon G \to H$ is a pair of locale morphisms $\varphi_0 \colon G_0 \to H_0$ and $\varphi_1 \colon G_1 \to H_1$, between the locales of objects and arrows respectively, which commute with the respective structure morphisms of the groupoids.
    \begin{equation}\label{eq:homogrpd}
        \begin{tikzcd}
	{G_1 \times_{G_0}G_1} & {H_1 \times_{H_0} H_1} \\
	{G_1} & {H_1} \\
	{G_0} & {H_0}
	\arrow["{\varphi_0}", from=3-1, to=3-2, swap]
	\arrow["{\varphi_1}", from=2-1, to=2-2]
	\arrow["s"', shift right=3, from=2-1, to=3-1]
	\arrow["t", shift left=3, from=2-1, to=3-1]
	\arrow["t'", shift left=3, from=2-2, to=3-2]
	\arrow["s'"', shift right=3, from=2-2, to=3-2]
	\arrow["e'"{description}, from=3-2, to=2-2]
	\arrow["e"{description}, from=3-1, to=2-1]
	\arrow[dashed, from=1-1, to=1-2]
	\arrow["m"', from=1-1, to=2-1]
	\arrow["m'", from=1-2, to=2-2]
\end{tikzcd}
    \end{equation}
\end{definition}

Each homomorphism of localic groupoids $\varphi \colon G \to H$ induces a geometric morphism $\Sh(\varphi) \colon \Sh(G) \to \Sh(H)$ (see \cite[\S 5]{moerdijk1988}).  The inverse image functor $\Sh(\varphi)^\ast$ sends the descent datum $(Y,\theta)$ to the pair consisting of $\varphi_0^\ast(Y)$ and the map
\[s^\ast \varphi_0^\ast(Y) = \varphi_1^\ast s'^\ast (Y) \xrightarrow{\varphi_1^\ast(\theta)} \varphi_1^\ast t'^\ast (Y) = t^\ast \varphi_0^\ast(Y).\]
That $\varphi_1^\ast(\theta)$ satisfies the required equations follows from the commutativity of \cref{eq:homogrpd}.  Each morphism $f \colon (Y,\theta) \to (Y',\theta')$ of descent data is sent by $\Sh(\varphi)^\ast$ to the map 
\[\varphi_0^\ast(f) \colon (\varphi_0^\ast(Y),\phi_1^\ast(\theta)) \to (\varphi_0^\ast(Y'),\phi_1^\ast(\theta')).\]
The required commutativity condition $\varphi_1^\ast(\theta') \circ s^\ast \phi_0^\ast (f) = t^\ast \varphi_0^\ast (f) \circ \varphi_1^\ast (\theta)$ follows, since
\begin{align*}
    \varphi_1^\ast(\theta') \circ s^\ast \phi_0^\ast (f) & = \varphi_1^\ast(\theta') \circ  \phi_1^\ast s'^\ast (f) \\
    & =  \varphi_1^\ast( \theta' \circ s'^\ast(f)) \\
    & = \varphi_1^\ast( t'^\ast(f) \circ \theta) \\
    & = \varphi_1^\ast t'^\ast(f) \circ \varphi_1^\ast(\theta) \\
    &= t^\ast \varphi_0^\ast (f) \circ \varphi_1^\ast (\theta).
\end{align*}

Thus, we can define a functor $\Sh \colon \LocGrpd \to \Topos$
from the category of localic groupoids and their homomorphisms into the category of toposes and geometric morphisms. The main result of Joyal and Tierney (see \cref{thm:joyal_tierney}) is equivalent to the statement that this functor is essentially surjective on objects.

\subsection{Reasoning using points}\label{sec:points}

As explained in \cite[\S 5.3]{moerdijk1988}, we can often express proofs in locale theory in the more familiar notation of point-set topology, provided a `point' $y \in Y$ is taken to mean a `generalised point' of $Y$, i.e.\ a map $y \colon U \to Y$.  In this case, we will call $y$ a $U$-\emph{point}.  To translate a `point-set' argument back to a concrete one, each instance of $y \in Y$ should be replaced by a generic locale morphism $y \colon U \to Y$, and the notation $f(y)$ for some map $f \colon Y \to X$ is translated as the composite $f \circ y \colon U \to Y \to X$.

We can also use generalised points of toposes, i.e.\ arbitrary geometric morphisms $f \colon \topos' \to \topos$, in order to reason about them as though they were spaces (see \cite{vickers2007locales}) --- though in this case we must also consider morphisms of points, since toposes exist at a higher categorical level than locales.  This is especially useful when combined with the theory of classifying toposes, since we can define a geometric morphism $g \colon \Set[\theory] \to \Set[\theory']$ by describing how $g$ acts on a (generalised) point $\ftopos \to \Set[\theory]$ and morphisms of these points.  That is to say, we can define $g$ by describing how it transforms a $\theory$-model (in $\ftopos$) into a $\theory'$-model and a $\theory$-model homomorphism into a $\theory'$-model homomorphism. For example, the localic geometric morphism $L \colon \Set[\theory] \to \Set[\NOtheory]$ associated with an $N$-sorted theory sends a $\theory$-model to the $N$ objects of its underlying sorts and a $\theory$-model homomorphism to the $N$ underlying functions between these objects.

This perspective lends itself well to the problem of determining the geometric theory classified by certain (bi)limits of other classifying toposes, using the method described in \cite[Proposition 8.43]{vickers2007locales}.

\begin{example}\label{ex:limitsoftoposes}
Let us consider some examples of how to compute limits with this approach.
  \begin{enumerate}
    \item\label{ex:limitsoftoposes:enum:product} Let $\theory$ and $\theory'$ be geometric theories. The data of an $\ftopos$-point of the product topos $\Set[\theory] \times \Set[\theory']$ can be defined by a pair of geometric morphisms $\ftopos \to \Set[\theory]$ and $\ftopos \to \Set[\theory']$ --- that is, a pair of a $\theory$-model and a $\theory'$-model in $\ftopos$.  Thus, we conclude that the topos $\Set[\theory] \times \Set[\theory']$ classifies the theory given by a copy of $\theory$ and a copy of $\theory'$ (over separate sorts).
    
    \item\label{ex:limitsoftoposes:enum:pullback} Let $\theory_1, \theory_2$ be \emph{localic expansions} (see \cite[Definition 7.1.1]{TST}) of a theory $\theory_3$, i.e.\ all three theories share the same sorts, but the theories $\theory_1$ and $\theory_2$ add new relation symbols and new axioms to $\theory_3$. Let $R_1 \colon \Set[\theory_1] \to \Set[\theory_3]$ be the geometric morphism that acts on (generalised) points by sending a $\theory_1$-model to its $\theory_3$-\emph{reduct}, i.e.\ the $\theory_3$-model obtained when we forget the extra structure added by $\theory_1$, and which sends a $\theory_1$-model homomorphism to its underlying homomorphism on the $\theory_3$-reducts. (This is precisely the localic geometric morphism posited by \cite[Theorem 7.1.3]{TST}.) We define $R_2 \colon \Set[\theory_2] \to \Set[\theory_3]$ in a similar fashion.
    
    An $\ftopos$-point of the (bi)pullback
    \[
    \begin{tikzcd}
        \Set[\theory_1] \times_{\Set[\theory_3]} \Set[\theory_2] \arrow[dr, phantom, "\lrcorner", very near start,xshift=-10pt] \ar{r} \ar{d} & \Set[\theory_2] \ar{d}{R_2} \\
        \Set[\theory_1] \ar[swap]{r}{R_1} & \Set[\theory_3]
    \end{tikzcd}
    \]
    consists of the data of a pair of $\ftopos$-points $M \colon \ftopos \to \Set[\theory_1]$ and $N \colon \ftopos \to \Set[\theory_2]$ and an isomorphism $R_1 \circ M \cong R_2 \circ N$.  That is, the topos $\Set[\theory_1] \times_{\Set[\theory_3]} \Set[\theory_2]$ classifies the theory whose models are a pair of a $\theory_1$-model and a $\theory_2$-model whose $\theory_3$-reducts are isomorphic.
  \end{enumerate}
\end{example}

\begin{remark}\label{rem:Tiso}
    Some readers may wonder how our theory is impacted when we vary the specific notion of 2-limit we consider. Ultimately, as classifying toposes are only defined up to equivalence, this won't be of importance provided that the notion is sufficiently weak. We will focus on comparing, for a geometric theory $\theory$, the various notions of `pullback' for the diagram
    \[
    \begin{tikzcd}
        & \Set[\theory] \ar{d}{\id_{\Set[\theory]}} \\
        \Set[\theory] \ar{r}{\id_{\Set[\theory]}} & \Set[\theory].
    \end{tikzcd}
    \]
    When calculating the bipullback as in \cref{ex:limitsoftoposes} above, we are implicitly taking the \emph{iso-comma object} of the cospan.  This is the topos $\topos$ that is universal with respect to the data of projections $r,u \colon \topos \rightrightarrows \Set[\theory]$ and an isomorphism
    \[\begin{tikzcd}
    \topos & {\Set[\theory]} \\
    {\Set[\theory]} & {\Set[\theory]}.
    \arrow["r", from=1-1, to=1-2]
    \arrow[""{name=0, anchor=center, inner sep=0}, "u"', from=1-1, to=2-1]
    \arrow["{\id_{\Set[\theory]}}"', from=2-1, to=2-2]
    \arrow[""{name=1, anchor=center, inner sep=0}, "{\id_{\Set[\theory]}}", from=1-2, to=2-2]
    \arrow["\cong"{description}, draw=none, from=0, to=1]
    \end{tikzcd}\]
    As in \cref{ex:limitsoftoposes}, we recognise that $\topos$ classifies the theory of $\theory$-model isomorphisms.  We denote this theory by $\theory_{\cong}$.  An explicit axiomatisation of this theory is given in \cref{def:locale_of_morphisms} below.
    
    Subtle changes to the notion of 2-pullback we take can change the specific presentation for the theory classified by the topos.  For example, if we instead considered the \emph{pseudo-pullback}, i.e.\ the topos $\topos'$ that is universal with respect to the data
    
    \[\begin{tikzcd}
    \topos' & {\Set[\theory]} \\
    {\Set[\theory]} & {\Set[\theory]},
    \arrow[dashed, from=1-1, to=1-2]
    \arrow[dashed, from=1-1, to=2-1]
    \arrow["{\id_{\Set[\theory]}}"', from=2-1, to=2-2]
    \arrow["{\id_{\Set[\theory]}}", from=1-2, to=2-2]
    \arrow[""{name=0, anchor=center, inner sep=0}, dashed, from=1-1, to=2-2]
    \arrow["\cong"{description}, draw=none, from=2-1, to=0]
    \arrow["\cong"{description}, draw=none, from=1-2, to=0]
    \end{tikzcd}\]
    we see that $\topos'$ classifies the theory $\theory_{\cong,\cong}$ whose models are triples of $\theory$-models and a pair of isomorphisms between these.
    
    However, such care will not be necessary.  Recall from \cite[Example 15]{Lack2010} that although the toposes $\Set[\theory_{\cong}]$ and $\Set[\theory_{\cong,\cong}]$ are not isomorphic as categories, they are equivalent (i.e.\ $\theory_{\cong}$ and $\theory_{\cong,\cong}$ are \emph{Morita-equivalent}).  We sidestep these issues by only working up to equivalence and referring to bipullbacks.
    
    In fact, for this specific example the iso-comma object $\Set[\theory_{\cong}]$, the pseudo-pullback $\Set[\theory_{\cong,\cong}]$ and the the (1-)pullback, which is evidently given by just $\Set[\theory]$, are all equivalent (see \cite{joyalstreet93}).  
\end{remark}

\section{Syntactic description of the representing localic groupoid}\label{sec:syndecr}

In this section we will state a presentation for a localic groupoid which represents (the classifying topos of) a geometric theory and give some intuition for the motivating ideas behind the Joyal--Tierney result.

Let $\theory$ be a (first-order) geometric theory. We would like to re-express $\theory$ in terms of \emph{propositional} geometric theories, so we can work within the simpler framework of locales instead of with the classifying topos $\Set[\theory]$. The points of this new propositional theory should somehow represent the models of the original theory $\theory$, including the objects being used to represent each sort. The question then is how to encode the sorts using a propositional theory.

\subsection{Sorts as partial equivalence relations}

If we were to focus on a single set-based model $M$, then we could include propositional variables in our language that express that $\vec{m} \in R^M$ for each relation $R$ of the theory and each appropriate tuple $\vec{m}$ of elements from $M$. More generally, we could imagine fixing some very large set $S$ and cutting out the carriers for each model as subsets of $S$. The issue is that there is generally no bound on how large the models might be.

Recall from \cref{ex:partial_surjections_from_N}, however, that the locale of partial surjections from $\N$ to any set $X$ is nontrivial. So there is a sense in which `every set is a subquotient of $\N$'. This motivates replacing the sorts in the theory $\theory$ by partial equivalence relations on $\N$, which describe these subquotients. Recall that a partial equivalence relation is a symmetric transitive relation and can be thought of as describing an equivalence relation on the subset of elements which are related to themselves. Partial equivalence relations can be axiomatised by a propositional theory.

\begin{definition}\label{def:locale_of_objects}
 For a geometric theory $\theory$, we define $G^\theory_0$ to be the classifying locale of a propositional geometric theory $P[\theory]$, defined as follows.
  \begin{itemize}
   \item For each sort $X$ of $\theory$, we add a copy of the theory of partial equivalence relations on $\N$. Explicitly, add a basic proposition ${[n \sim^X m]}$ for each $n,m \in \N$ and, for each $n,m,\ell \in \N$, and the axioms
   \begin{align*}
   {[n \sim^X m]} &\vdash {[m \sim^X n]}, && \rlap{\text{(symmetry)}} \\
   {[n \sim^X m]} \land {[m \sim^X \ell]} &\vdash {[n \sim^X \ell]}. && \rlap{\text{(transitivity)}}
   \end{align*}
   \item For each relation symbol $R \subseteq X^1 \times \dots \times X^k$ of $\theory$ (not including the equality relation), and for each $n_1,\dots,n_k \in \N$ and $m_1,\dots,m_k \in \N$, we have a basic proposition ${[(n_1,\dots,n_k) \in R]}$ and axioms
  \begin{align*}
      {[(n_1,\dots,n_k) \in R]} \land {[n_1 \sim^{X^1} m_1]} \land \dots \land {[n_k \sim^{X^k} m_k]} \vdash {[(m_1,\dots,m_k) \in R]},  \\
      {[(n_1,\dots,n_k) \in R]} \vdash {[n_1 \sim^{X^1} n_1]} \land \dots \land {[n_k \sim^{X^k} n_k]}.
   \end{align*}
  \item For each axiom $\phi \vdash_{x_1\colon X^1,\dots,x_k\colon X^k} \psi $ of $\theory$, we add an axiom 
  \[\bigwedge_{i=1}^k {[n_i \sim^{X^i} n_i]} \wedge \phi_{n_1,\dots,n_k} \vdash \psi_{n_1,\dots,n_k}\]
  for each $n_1,\dots,n_k \in \N$, where $\phi_{n_1,\dots,n_k}$ and $\psi_{n_1,\dots,n_k}$ are obtained from $\phi$ and $\psi$ by replacing each free variable $x_i$ by a (fixed) natural number $n_i$, each quantifier $\exists x\colon X.\ \chi(x,\dots)$ by a join $\bigvee_{n_x \in \N} \chi(n_x,\dots)$, each subformula of the form $(y_1,\dots,y_\ell) \in R$ with ${[(y_1,\dots,y_\ell) \in R]}$, and each subformula of the form $x \mathrel{=}_X y$ with ${[x \sim^X y]}$.
  \end{itemize}
\end{definition}
Here we have simply translated the relations on the sorts to relations on $\N$ that respect the partial equivalence relation. We have written the axioms in terms of these (with existential quantification over sorts being expressed using joins over the natural numbers).  Evidently, if $\theory$ is a propositional theory (i.e.\ there are no sorts), then $\theory$ and $P[\theory]$ are the same theory.

\begin{remark}\label{rem:points_of_G_0}
    The points of the locale $G_0^\theory$ are the models of $\theory$ where each sort is a subquotient of $\N$ (represented by a partial equivalence relation).  Given a point $p \colon 1 \to G_0^\theory$, a pair of natural numbers $n, m \in \N$ are identified by the partial equivalence relation $\sim^X$ on $\N$, corresponding to a sort $X$ of $\theory$, if and only if $p^\ast ([n \sim^X m]) = 1$, while a tuple of equivalence classes of natural numbers $([n_1], \dots , [n_k]) \in \N/{\sim^{X^1}} \times \dots \times \N/{\sim^{X^k}}$ is contained in the interpretation of the relation symbol $R$ if and only if $p^\ast([(n_1, \dots, n_k) \in R]) = 1$.
\end{remark}

\begin{remark}\label{rem:generators_for_equality_relation}
Note that the generators $[n \sim^X m]$ can be thought of as replacing the equality predicate $=_X$ on the sort $X$.  For that reason, we will often treat $[n \sim^X m]$ as a special case of $[(n,m) \in R]$ where $R$ is given by equality.
\end{remark}

\subsection{Encoding isomorphic copies}

The points of the locale $G^\theory_0$ are given by representations of models of $\theory$ as subquotients of $\N$. However, different subquotients of $\N$ might correspond to isomorphic models. To deal with this we need to construct a locale of isomorphisms.

We can write a geometric theory $\theory_{\cong}$ that describes isomorphisms between models of $\theory$ and then transform it into a propositional theory as we did for $\theory$ above.  This is precisely the theory classified by the iso-comma object described in \cref{rem:Tiso}.

\begin{definition}\label{def:locale_of_morphisms}
  We define the locale $G^\theory_1$ to be the classifying locale of a propositional geometric theory $P[\theory_{\cong}]$ (defined as above), where $\theory_{\cong}$ is a geometric theory with:
  \begin{itemize}
   \item for each sort $X$, relation symbol $R$ or axiom $\phi \vdash_{\vec{x}} \psi$ of $\theory$, two sorts $X_1,X_2$, relation symbols $R_1,R_2$ or axioms $\phi_{1,2} \vdash_{\vec{x}_{1,2}} \psi_{1,2}$ (where $R_i$ is defined on the $i$-subscripted sorts and so on),
   \item for each sort $X$ in $\theory$, a relation symbol $\alpha^X\subseteq X_1 \times X_2$ together with the axiom 
   \[(x,y) \in \alpha^X \land (x',y') \in \alpha^X \land x =_{X_1} x' \mathrel{\dashv\vdash}_{x,y\colon X_1} (x,y) \in \alpha^X \land (x',y') \in \alpha^X \land y =_{X_2} y'\]
   (where $\mathrel{\dashv\vdash}$ denotes a bidirectional sequent)
   and the axioms
   \begin{displaymath}
\begin{array}{r@{\hspace{1.5ex}}l@{\hspace{1.5ex}}l@{\quad}@{\quad}r@{}}
      & \vdash_{y\colon X_2} \exists x\colon X_1. \ (x,y) \in \alpha^X , &  \\
      & \vdash_{x\colon X_1} \exists y\colon X_2. \ (x,y) \in \alpha^X , & 
   \end{array}
   \end{displaymath}
   making $\alpha$ into the graph of a bijection\footnote{For clarity we will often write $\alpha$ suggestively as though it were a function.},
   \item for each relation symbol $R$ in $\theory$, the axioms 
   \[\bigwedge_{i=1}^k (x_i,y_i) \in \alpha^{X^i} \land (x_1,\dots,x_k) \in R_1 \mathrel{\dashv\vdash}_{x_1,\dots,x_k,y_1,\dots,y_k} \bigwedge_{i=1}^k (x_i,y_i) \in \alpha^{X^i} \land (y_1,\dots,y_k) \in R_2.\]
  \end{itemize}
\end{definition}
We remark that the third bullet point entails that corresponding basic propositions from each copy are equivalent, since they can be viewed as nullary relations. In particular, if $\theory$ is already a propositional geometric theory then $\theory$, $P[\theory]$ and $P[\theory_{\cong}]$ are all equivalent.

\begin{remark}
    Analogously to \cref{rem:points_of_G_0}, the points of $G_1^\theory$ correspond to a pair of models of $\theory$, where each sort is a subquotient of $\N$, together with an isomorphism between them.
\end{remark}

We can now form a coequaliser diagram in $\Loc$ that identifies isomorphic models in $G^\theory_0$. However, this loses information about the original theory $\theory$. (In fact, it recovers the localic reflection of $\theory$. See \cref{sec:isoclasses} below.)

The problem is that by taking the quotient we have lost information about the automorphisms of the models, and therefore about the individual (generalised) elements of each model. Indeed, the first-order theory describes objects at a higher categorical dimension than the propositional theory (propositions can only be related by implication, but there are potentially multiple different morphisms between sorts). We can retain the information by equipping $G^\theory_0$ and $G^\theory_1$ with the structure of a localic groupoid.

\begin{remark}
 One might think that we would need a localic \emph{category} recording all morphisms instead of a localic groupoid recording only the isomorphisms, but restricting to isomorphisms turns out to be sufficient.
 In a later paper we will discuss in detail how to recover the non-invertible morphisms from the localic groupoid.
\end{remark}

\begin{definition}\label{def:the_localic_groupoid}
 The localic groupoid $G^\theory$ has $G^\theory_0$ as its locale of objects, $G^\theory_1$ as is locale of morphisms and the following structure maps.
 \begin{itemize}
  \item The source map $s\colon G^\theory_1 \to G^\theory_0$ is specified by the obvious frame homomorphism defined by
  \begin{align*}
      {[(n_1,\dots,n_k) \in R]} &\mapsto {[(n_1,\dots,n_k) \in R_1]},
  \end{align*}
  where we think of the action on $[n \sim^X m]$ as the case $R = ({=_X})$ as in \cref{rem:generators_for_equality_relation}.
  \item Similarly, the target map $t\colon G^\theory_1 \to G^\theory_0$ is specified by the frame map defined by
  \begin{align*}
      {[(n_1,\dots,n_k) \in R]} &\mapsto {[(n_1,\dots,n_k) \in R_2]}.
  \end{align*}
  \item The identity map $e\colon G^\theory_0 \to G^\theory_1$ is given by frame homomorphism defined by
  \begin{align*}
      {[(n_1,\dots,n_k) \in R_1]} &\mapsto {[(n_1,\dots,n_k) \in R]}, \\
      {[(n_1,\dots,n_k) \in R_2]} &\mapsto {[(n_1,\dots,n_k) \in R]}, \\
      {[\alpha^X(n) = m]} &\mapsto {[n \sim^X m]}.
  \end{align*}
  \item The inversion map $i\colon G^\theory_1 \to G^\theory_1$ swaps the two copies of the sorts in the sense that
  \begin{align*}
      {[(n_1,\dots,n_k) \in R_1]} &\mapsto {[(n_1,\dots,n_k) \in R_2]}, \\
      {[(n_1,\dots,n_k) \in R_2]} &\mapsto {[(n_1,\dots,n_k) \in R_1]} ,\\
      {[\alpha^X(n) = m]} &\mapsto {[\alpha^X(m) = n]}.
  \end{align*}
  \item The composition map $m\colon G^\theory_1 \times_{G^\theory_0} G^\theory_1 \to G^\theory_1$ is given as follows.
  \begin{itemize}
   \item The domain of the composition map can be presented by $P[\theory_{\cong,\cong}]$ where $\theory_{\cong,\cong}$ is like $\theory_{\cong}$ above, but there are three copies of the theory $\theory$ instead of two and there are two relation symbols $\beta^X \subseteq X_1 \times X_2$ and $\gamma^X \subseteq X_2 \times X_3$ for each sort $X$, encoding two $\theory$-model isomorphisms, instead of one relation symbol $\alpha^X$.
   \item The map $m$ itself is given by the frame homomorphism for which
   \begin{align*}
      {[(n_1,\dots,n_k) \in R_1]} &\mapsto {[(n_1,\dots,n_k) \in R_1]}, \\
      {[(n_1,\dots,n_k) \in R_2]} &\mapsto {[(n_1,\dots,n_k) \in R_3]} ,\\
      {[\alpha^X(n) = p]} &\mapsto \bigvee_{m \in \N} {[\beta^X(n) = m]} \wedge {[\gamma^X(m) = p]}.
  \end{align*}
  (Intuitively, the map $m$ sends the pair of relations $(\beta^X,\gamma^X)$ to their relational composite.)
  \end{itemize}
 \end{itemize}
\end{definition}

We omit the routine proof that this is indeed a localic groupoid.

\begin{remark}
 The set $\N$ is actually only the simplest possible choice of base set for the above construction. All the properties we prove of the localic groupoid $G^\theory$ (other than those discussed in \cref{sec:countable}) will still hold if $\N$ is replaced with a larger infinite set. See \cite{lindberg} for more details.
\end{remark}

\section{An overview of the Joyal--Tierney theorem}\label{sec:joyaltierney}

We now give an overview of the Joyal--Tierney result from \cite{joyal1984galois}. In \cref{sec:proof}, we will show that the representing localic groupoid of the classifying topos $\Set[\theory]$ constructed via the Joyal--Tierney method is essentially the groupoid described above in \cref{sec:syndecr}.

This section can be summarised as follows.
\begin{itemize}
     \item In \cref{sec:descent}, we recall the theory of descent exposited in \cite{joyal1984galois}. Given a geometric morphism $f \colon \ftopos \to \topos$, this is a way to study objects of $\topos$ by equipping objects of $\ftopos$ with additional data. This data forms a topos $\Desc{f}{\ftopos}$.
     If $f \colon \ftopos \twoheadrightarrow \topos$ is an open surjection, then there is an equivalence $\Desc{f}{\ftopos} \simeq \topos$.
     \item In \cref{sec:descentandequiv} we note that $\Desc{f}{\ftopos}$ is naturally represented by a localic groupoid whenever $\ftopos$ is a localic topos. Therefore, one can obtain a representation of $\topos$ by a localic groupoid from an open surjection $\ftopos \twoheadrightarrow \topos$ whose domain is localic (called an \emph{open cover}).
     \item Finally, in \cref{sec:partequiv} we construct an open cover of every topos $\topos$ and hence conclude the Joyal--Tierney theorem that every topos is the topos of sheaves on some localic groupoid.
\end{itemize}

\subsection{Descent theory}\label{sec:descent}

In order to prove their representation theorem, Joyal and Tierney developed in \cite{joyal1984galois} a \emph{descent theory} for toposes.  We will treat descent theory as a `black  box', recalling below the necessary facts we will use in our exposition.  For details, the reader is directed to \cite[\S VIII]{joyal1984galois} and \cite[\S B1.5 and \S C5.1]{Elephant}.

Recall that if $f\colon X \to Y$ is a morphism in a (finitely complete) 1-category $\cat$, then the pullback of $f$ along itself gives the \emph{kernel pair} of $f$. This has the structure of an internal equivalence relation in $\cat$. If $f$ is a `good' quotient map (in this case, a regular epimorphism), then it can be recovered from this equivalence relation. The situation in the 2-category of toposes is similar, but instead of an internal equivalence relation, we obtain an internal groupoid.

A geometric morphism $f \colon \ftopos \to \topos$ between toposes induces an internal groupoid in $\Topos$ as in the diagram
\begin{equation*}\label{diagram:descent}
\begin{tikzcd}
    \ftopos \times_\topos \ftopos \times_\topos \ftopos \ar[shift left = 4]{r}{\pi_{2,3}} \ar{r}{\pi_{1,3}} \ar[shift right = 4]{r}{\pi_{1,2}} & \ftopos \times_\topos \ftopos \ar[shift left = 4]{r}{\pi_2} \ar[shift right = 4]{r}{\pi_1}  \arrow[loop,  "\tau "', distance=2em, in=305, out=235]
    & \ar{l}[']{\Delta} \ftopos \ar{r}{f} & \topos,
    \end{tikzcd}
\end{equation*}
where $\tau \colon  \ftopos \times_\topos \ftopos \to \ftopos \times_\topos \ftopos$ is the twist map, $\Delta \colon \ftopos \to \ftopos \times_\topos \ftopos$ is the diagonal, and the remaining maps are the appropriate projections.
\begin{definition}
    The category $\Desc{f}{\ftopos}$ of \emph{descent data} for $f$ is defined as follows.
    \begin{enumerate}
        \item The objects of $\Desc{f}{\ftopos}$ are pairs $(X,\theta)$ consisting of an object $X \in \ftopos$ and an isomorphism $\theta \colon \pi_1^\ast X \xrightarrow{\sim} \pi_2^\ast X$ of $\ftopos \times_\topos \ftopos$ such that
        \[\Delta^\ast(\theta) = \id_X \text{ and } \pi_{1,3}^\ast(\theta) = \pi_{2,3}^\ast(\theta) \circ \pi_{1,2}^\ast(\theta).\]
        This is known as a \emph{descent datum} on $X$.
        \item A morphism $g \colon (X,\theta) \to (X',\theta')$ in $\Desc{f}{\ftopos}$ is a morphism $g \colon X \to X'$ of $\ftopos$ such that the square
        \[\begin{tikzcd}
            \pi_1^\ast X \ar{r}{\theta} \ar{d}[']{\pi_1^\ast(g)} & \pi_2^\ast X \ar{d}{\pi_2^\ast(g)} \\
            \pi_1^\ast X' \ar[swap]{r}{\theta'} & \pi_2^\ast X'
        \end{tikzcd}\]
        commutes.
    \end{enumerate}
\end{definition}

The category $\Desc{f}{\ftopos}$ is a topos, and there is a \emph{canonical functor} $c^\ast \colon \topos \to \Desc{f}{\ftopos}$ that sends an object $E \in \topos$ to the pair consisting of $f^\ast E$ and the canonical isomorphism $\pi_1^\ast f^\ast E \cong \pi_2^\ast f^\ast E$ (arising from the 2-cell of the bipullback).

In fact, in \cite[\S 3]{moerdijk1988} Moerdijk shows that the topos $\Desc{f}{\ftopos}$ is obtained as the colimit in the 2-category $\Topos$ of the diagram
\[
\begin{tikzcd}
        \ftopos \times_\topos \ftopos \times_\topos \ftopos \ar[shift left = 4]{r}{\pi_{2,3}} \ar{r}{\pi_{1,3}} \ar[shift right = 4]{r}{\pi_{1,2}} & \ftopos \times_\topos \ftopos \ar[shift left = 4]{r}{\pi_2} \ar[shift right = 4]{r}{\pi_1}  \arrow[loop,  "\tau "', distance=2em, in=305, out=235]
        & \ar{l}[']{\Delta} \ftopos \ar{r} & \Desc{f}{\ftopos},
        \end{tikzcd}
\]
and the canonical functor $c^\ast \colon \topos \to \Desc{f}{\ftopos}$ is the inverse image part of the universally induced geometric morphism $\Desc{f}{\ftopos} \to \topos$.
This is analogous to how a morphism in a 1-category factors through the coequaliser of its kernel pair.

The problem of descent involves discerning for which geometric morphisms $f \colon \ftopos \to \topos$ the canonical functor $c^\ast \colon \topos \to \Desc{f}{\ftopos}$ is an equivalence. Such geometric morphisms play the same role as regular epimorphisms did in our 1-categorical analogy.

\begin{definition}
  A geometric morphism $f \colon \ftopos \to \topos$ is called an \emph{effective descent morphism} if the canonical functor $c^\ast \colon \topos \to \Desc{f}{\ftopos}$ is an equivalence.
\end{definition}

The terminology `descent' was used by Joyal and Tierney in analogy with descent theory for modules (see \cite[\S II.5]{joyal1984galois}).
If $f$ is an effective descent morphism, we say an object $(X,\theta) \in \Desc{f}{\ftopos}$ \emph{descends} along $f$ in the sense that there exists some $E \in \topos$ such that $(X,\theta) \cong (f^\ast E, \pi_1^\ast f^\ast E \cong \pi_2^\ast f^\ast E)$.

Many examples of classes of effective descent morphisms are known, including \emph{proper surjections} (see \cite[Definition C3.2.5 \& Theorem C5.1.6]{Elephant}). We will focus solely on \emph{open surjections}, which are the class of effective descent morphisms needed for the Joyal--Tierney result, and which were shown to be effective descent morphisms in \cite[Theorem VIII.2.1]{joyal1984galois}.

\subsection{Descent data with a localic domain}\label{sec:descentandequiv}

When the domain topos of a geometric morphism $f \colon \ftopos \to \topos$ is localic, say $\ftopos \simeq \Sh(G_0)$, the category of descent data $\Desc{f}{\ftopos}$ is equivalent to the topos of sheaves on some localic groupoid whose locale of objects is $G_0$. This is observed in \cite[\S VIII.3]{joyal1984galois}. To see why this is the case, we first recall two facts about localic geometric morphisms from \cref{sec:propertiesofgeomorph}. 
\begin{enumerate}
    \item Localic geometric morphisms are stable under pullback.
    \item If $f \colon \mathcal{H}' \to \mathcal{H}$ is a localic geometric morphism and $\mathcal{H}$ is a localic topos, then the topos $\mathcal{H}'$ is also localic.
\end{enumerate}
Hence, if $f \colon \ftopos \to \topos$ is a geometric morphism whose domain $\ftopos$ is a localic topos, then the pullback
\[
\begin{tikzcd}
    \ftopos \times_\topos \ftopos \ar{r}{\pi_1} \ar{d}[']{\pi_2} \arrow[dr, phantom, "\lrcorner", very near start]& \ftopos \ar{d}{f} \\
    \ftopos \ar[swap]{r}{f} & \topos
\end{tikzcd}
\]
is also a localic topos, as is the wide pullback $\ftopos \times_\topos \ftopos \times_\topos \ftopos$.  Therefore, as the fully faithful functor $\Sh \colon \Loc \to \Topos$ reflects limits, the descent diagram
\[\begin{tikzcd}
        \ftopos \times_\topos \ftopos \times_\topos \ftopos \ar[shift left = 4]{r}{\pi_{2,3}} \ar{r}{\pi_{1,3}} \ar[shift right = 4]{r}{\pi_{1,2}} & \ftopos \times_\topos \ftopos  \arrow[loop,  "\tau "', distance=2em, in=305, out=235] \ar[shift left = 4]{r}{\pi_2} \ar[shift right = 4]{r}{\pi_1} & \ar{l}[']{\Delta} \ftopos
\end{tikzcd}\]
is the image under $\Sh$ of a localic groupoid $G$:
\begin{equation}\label{diagram:grpd}
\begin{tikzcd}
 G_1 \times_{G_0} G_1 \ar[shift left = 4]{r}{\pi_{2}} \ar{r}{m} \ar[shift right = 4]{r}{\pi_{1}} & G_1 \arrow[loop,  "i"', distance=2em, in=305, out=235] \ar[shift left = 4]{r}{t} \ar[shift right = 4]{r}{s} & \ar{l}[']{e} G_0 .
\end{tikzcd}
\end{equation}

As $\ftopos \simeq \Sh(G_0)$, an object $X \in \ftopos$ is a local homeomorphism $q \colon Y \to G_0$, and descent datum $\theta \colon \pi_1^\ast(X) \to \pi_2^\ast(X)$ on $X$ is a morphism $\theta\colon s^\ast(Y) \to t^\ast (Y)$ in $\Sh(G_1)$ such that
$\id_{G_0} = e^\ast(\theta)$ and $m^\ast(\theta) = \pi_2^\ast(\theta) \circ \pi_1^\ast(\theta)$, i.e.\ an object $(Y,\theta) \in \Sh(G)$.  Similarly, arrows in $\Desc{f}{\ftopos}$ correspond to arrows in $\Sh(G)$.  Thus, there is an equivalence $\Sh(G) \simeq \Desc{f}{\ftopos}$ from which we obtain Theorem VIII.3.2 of \cite{joyal1984galois}.

\begin{theorem}\label{thm:fromeffdesctoreprgrpd}
    Let $f \colon \Sh(G_0) \to \topos$ be an effective descent morphism.  The topos $\topos$ is equivalent to the topos of equivariant sheaves on the localic groupoid $G$ whose locale of objects is $G_0$, and whose source and target maps $s, t \colon G_1 \rightrightarrows G_0$ make the square
    \[
    \begin{tikzcd}
        \Sh(G_1) \ar{r}{\Sh(s)} \ar{d}[']{\Sh(t)} & \Sh(G_0) \ar{d}{f}\\
        \Sh(G_0) \ar[swap]{r}{f} & \topos
    \end{tikzcd}
    \]
    a (bi)pullback of toposes.
\end{theorem}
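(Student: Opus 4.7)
The plan is to combine the descent formalism with the two stability properties of localic geometric morphisms recalled in \cref{sec:propertiesofgeomorph}. As noted in the paragraph preceding the theorem, since $\ftopos \simeq \Sh(G_0)$ is a localic topos, the pullbacks $\ftopos \times_\topos \ftopos$ and $\ftopos \times_\topos \ftopos \times_\topos \ftopos$ are again localic, so the entire descent diagram lies in the image of the fully faithful, limit-reflecting embedding $\Sh \colon \Loc \hookrightarrow \Topos$. Transporting the diagram back across $\Sh$ produces locales $G_1$ and $G_1 \times_{G_0} G_1$ together with structure morphisms $s, t, e, m, i$ satisfying the groupoid axioms (each identity transfers across the equivalences by functoriality and faithfulness of $\Sh$, since the descent diagram is itself an internal groupoid in $\Topos$). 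The pullback square claimed in the theorem is then immediate: by construction, $\Sh(G_1)$ is $\ftopos \times_\topos \ftopos$ and $\Sh(s), \Sh(t)$ are the projections $\pi_1, \pi_2$.

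Next I would identify $\Desc{f}{\ftopos}$ with $\Sh(G)$ directly. Unpacking definitions under $\Sh(G_0) \simeq \ftopos$, an object $(X,\theta) \in \Desc{f}{\ftopos}$ becomes a local homeomorphism $q \colon Y \to G_0$ together with a morphism $\theta \colon s^\ast Y \to t^\ast Y$ in $\Sh(G_1)$ subject to $e^\ast \theta = \id_Y$ and $m^\ast \theta = \pi_2^\ast \theta \circ \pi_1^\ast \theta$. This is precisely a descent datum for the localic groupoid $G$, matching the second description of $\Sh(G)$ given in \cref{sec:background}. The same translation applies to morphisms, so $\Desc{f}{\ftopos} \simeq \Sh(G)$ as categories.

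Combining the two steps with the hypothesis that $f$ is an effective descent morphism, so that the canonical functor $c^\ast \colon \topos \to \Desc{f}{\ftopos}$ is already an equivalence, yields $\topos \simeq \Sh(G)$, completing the proof. The principal bookkeeping obstacle is making the dictionary between the descent axioms and the groupoid axioms watertight: one has to verify that under the equivalences $\Sh(G_0) \simeq \ftopos$, $\Sh(G_1) \simeq \ftopos \times_\topos \ftopos$ and $\Sh(G_1 \times_{G_0} G_1) \simeq \ftopos \times_\topos \ftopos \times_\topos \ftopos$, the inverse image functors $e^\ast, s^\ast, t^\ast, \pi_1^\ast, \pi_2^\ast, m^\ast$ correspond respectively to $\Delta^\ast, \pi_1^\ast, \pi_2^\ast, \pi_{1,2}^\ast, \pi_{2,3}^\ast, \pi_{1,3}^\ast$ of the descent diagram, so that the two sets of cocycle equations genuinely coincide. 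Beyond this essentially symbolic check, no further substantive work is needed.
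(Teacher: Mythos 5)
Your proposal is correct and follows essentially the same route as the paper: both use the pullback-stability and composition-closure of localic geometric morphisms to see that the descent diagram lies in the image of the fully faithful, limit-reflecting functor $\Sh \colon \Loc \hookrightarrow \Topos$, transport it back to a localic groupoid $G$, identify $\Desc{f}{\ftopos}$ with $\Sh(G)$ by unravelling definitions, and conclude via effective descent. The dictionary you flag as the remaining bookkeeping ($e^\ast, s^\ast, t^\ast, \pi_1^\ast, \pi_2^\ast, m^\ast$ matching $\Delta^\ast, \pi_1^\ast, \pi_2^\ast, \pi_{1,2}^\ast, \pi_{2,3}^\ast, \pi_{1,3}^\ast$) is exactly the identification the paper also treats as a matter of unravelling definitions.
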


Since open surjections are effective descent morphisms, this theorem applies in particular to what we call \emph{open covers}.
\begin{definition}
    An \emph{open cover} of the topos $\topos$ is an open surjection $\ftopos \twoheadrightarrow \topos$ whose domain topos $\ftopos$ is localic.
\end{definition}

Recall that open geometric morphisms are stable under (bi)pullback. So if $f \colon \Sh(G_0) \to \topos$ is an open cover, then projections $\pi_1$ and $\pi_2$ in the (bi)pullback below are open too.
\[\begin{tikzcd}
   \Sh(G_0) \times_\topos \Sh(G_0) \ar{d}[']{\pi_1} \ar{r}{\pi_2} \arrow[dr, phantom, "\lrcorner", pos = 0.05] & \Sh(G_0) \ar{d}{f} \\
    \Sh(G_0) \ar[swap]{r}{f} & \topos
\end{tikzcd}\]
This means that the source and target maps $s, t \colon G_1 \rightrightarrows G_0$ of the induced localic groupoid $G$ displayed in \cref{diagram:grpd} are open locale morphisms --- that is, $\topos$ has an \emph{open} representing groupoid.

\begin{remark}\label{rem:pullbackstable}
The same analysis holds for any other property of geometric morphisms that is stable under pullback.  For example, if the effective descent morphism $f \colon \Sh(G_0) \to \topos$ is proper or connected and locally connected, then the resulting representing groupoid for $\topos$ is also proper or connected and locally connected (in the sense that the source and target maps have these properties).
\end{remark}

\subsection{Open covers via partial equivalence relations}\label{sec:partequiv}

We are halfway to showing that every topos can be represented as the topos of sheaves on an open localic groupoid.  The remaining task is to prove that every topos has an open cover.

To find an open cover of a topos $\topos$, it suffices to find a localic geometric morphism $h \colon \topos \to \mathcal{H}$ and an open cover $f \colon \ftopos \twoheadrightarrow \mathcal{H}$, since then in the (bi)pullback
\[
\begin{tikzcd}
    \ftopos \times_\mathcal{H} \topos \ar{r}{g} \ar[two heads]{d}[']{k} \arrow[dr, phantom, "\lrcorner", very near start]& \ftopos \ar[two heads]{d}{f}  \\
    \topos \ar[swap]{r}{h} & \mathcal{H},
\end{tikzcd}
\]
the map $k \colon \ftopos \times_\mathcal{H}\topos \twoheadrightarrow \topos$ is an open surjective geometric morphism whose domain is a localic topos, as the composite $\ftopos \times_{\mathcal{H}} \topos \xrightarrow{g} \ftopos \to \Set$ is a localic morphism.
Hence, $k \colon \ftopos \times_\mathcal{H}\topos \twoheadrightarrow \topos$ is an open cover.

Suppose the topos $\topos$ classifies a theory $\theory$ with $N$ sorts. Recall from \cref{sec:classifying,sec:points} that there is a localic geometric morphism $L \colon \topos \to \Set[\NOtheory]$ which sends a $\theory$-model to the $N$ underlying objects interpreting the sorts. This will play the role of $h$ in the diagram above.

\begin{remark}
In fact, we can always choose $N$ to be $1$, since every geometric theory $\theory$ is Morita-equivalent to a single-sorted theory. This appears in \cite{joyal1984galois} as Proposition VII.3.1, but an entirely syntactic proof is given in \cite[Lemma D1.4.13]{Elephant}. In summary, the idea is to combine all the sorts of the theory into one, and introduce new unary relation symbols, $R^X$ for each sort $X$, such that $x \in R^X$ expresses the statement ``$x$ belongs to the sort $X$''. 
\end{remark}

We must now describe an open cover of $\Set[\NOtheory]$ to play the role of $f$.
As anticipated in \cref{sec:syndecr}, there is a sense in which `every set is a subquotient of $\N$' and so we are once again motivated to consider partial equivalence relations on $\N$.
Denote the classifying topos of partial equivalence relations on $N$ copies of $\N$ by $\Set[\PQtheory]$.  Explicitly, this is the propositional theory whose basic propositions are ${[n \sim^i m]}$ for each $n,m \in \N$, and $i \in N$ (meaning that $n,m$ are identified in the $i$th partial equivalence relation on $\N$), and whose axioms are
\begin{displaymath}
\begin{array}{r@{\hspace{1.5ex}}l@{\hspace{1.5ex}}l@{\quad}@{\quad}r@{}}
  {[n \sim^i m]}&\vdash& {[m \sim^i n]} & \text{(symmetry)} \\
  {[n \sim^i \ell]} \land {[\ell \sim^i m]} & \vdash  & {[n \sim^i m]} & \text{(transitivity)}
\end{array}
\end{displaymath}
for each $n,m,\ell \in \N$ and $i \in N$.

There is a geometric morphism $Q \colon \Set[\PQtheory] \to \Set[\NOtheory]$ which can be defined by its action on points as in \cref{sec:points}. It sends the $N$ partial equivalence relations on $\N$ given by a point of $\Set[\PQtheory]$ to the $N$ corresponding subquotient objects (hence giving a point of $\Set[\NOtheory]$).  This geometric morphism possesses many desirable properties: it is open and surjective, but also connected and locally connected (see \cite[Theorem C5.2.7]{Elephant}). Hence, we indeed have an open cover of $\Set[\NOtheory]$.

Now we obtain an open cover $\overline{P}_N[\topos] \twoheadrightarrow \topos$ by taking the (bi)pullback
\[\begin{tikzcd}
    \overline{P}_N[\topos] \arrow[dr, phantom, "\lrcorner", very near start] \ar{r}{Q^\ast(L)} \ar[two heads]{d}[']{L^\ast(Q)} & \Set[\PQtheory] \ar[two heads]{d}{Q} \\
    \topos \ar[swap]{r}{L} & \Set[\NOtheory].
\end{tikzcd}\]
Note that $\overline{P}_N[\topos]$ is not determined only by $\topos$, but by the map $L\colon \topos \to \Set[\NOtheory]$. This map is defined by a choice of geometric theory $\theory$ classified by $\topos$. (Recall that every topos classifies some geometric theory.)
In \cref{sec:proof} we will see that $\overline{P}_N[\topos]$ is classifying topos for the theory $P[\theory]$ defined in \cref{def:locale_of_objects} (see \cref{lem:Pfortopos_and_Pfortheory_agree}).

Finally, by applying \cref{thm:fromeffdesctoreprgrpd} we arrive at the landmark result of Joyal and Tierney \cite[Theorem VIII.3.2]{joyal1984galois}.
\begin{theorem}[Joyal--Tierney]\label{thm:joyal_tierney}
    Every Grothendieck topos can be represented as the topos of equivariant sheaves for a localic groupoid.
\end{theorem}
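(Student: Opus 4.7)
The plan is essentially to stitch together the three subsections immediately preceding the statement: given an arbitrary Grothendieck topos $\topos$, I will construct an open cover whose domain is localic, then invoke \cref{thm:fromeffdesctoreprgrpd} to conclude.

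First I would choose a geometric theory $\theory$ classified by $\topos$; such a theory always exists by the discussion in \cref{sec:classifying}. Let $N$ be the number of sorts of $\theory$ (one can even take $N=1$ by the remark after \cref{sec:partequiv}, but this is inessential). Using the perspective of \cref{sec:points}, define $L\colon\topos\to\Set[\NOtheory]$ as the geometric morphism sending a $\theory$-model to its underlying $N$-tuple of carrier objects and a $\theory$-model homomorphism to its underlying $N$-tuple of maps. By \cite[Theorem D3.2.5]{Elephant} (or the corresponding result in \cite{TST}) this morphism $L$ is localic.

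Next I would recall the open surjection $Q\colon\Set[\PQtheory]\twoheadrightarrow\Set[\NOtheory]$ classifying $N$-tuples of partial equivalence relations on $\N$, which presents the generic $N$-tuple of sets as a subquotient of $\N$. This $Q$ is open and surjective by \cite[Theorem C5.2.7]{Elephant}, hence is an open cover of $\Set[\NOtheory]$. I would then form the bipullback
\[
\begin{tikzcd}
\overline{P}_N[\topos] \arrow[dr, phantom, "\lrcorner", very near start] \ar{r}{Q^\ast(L)} \ar[two heads]{d}[']{k} & \Set[\PQtheory] \ar[two heads]{d}{Q} \\
\topos \ar[swap]{r}{L} & \Set[\NOtheory].
\end{tikzcd}
\]
Since open surjections are stable under bipullback (\cref{sec:propertiesofgeomorph}), the map $k$ is an open surjection. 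Its domain $\overline{P}_N[\topos]$ is localic: indeed $Q^\ast(L)$ is the pullback of a localic morphism and hence localic, and $\Set[\PQtheory]$ is the classifying topos of a propositional theory and hence is itself localic, so the composite $\overline{P}_N[\topos]\to\Set[\PQtheory]\to\Set$ is localic. Thus $k$ is an open cover of $\topos$.

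Finally, since open surjections are effective descent morphisms by \cite[Theorem VIII.2.1]{joyal1984galois}, I would apply \cref{thm:fromeffdesctoreprgrpd} to the open cover $k\colon\overline{P}_N[\topos]\twoheadrightarrow\topos$ to obtain a localic groupoid $G$ with $G_0$ the locale satisfying $\Sh(G_0)\simeq\overline{P}_N[\topos]$ (and $G_1$ determined by the bipullback of $k$ with itself) such that $\topos\simeq\Sh(G)$. The main conceptual step, namely the proof that open surjections are of effective descent, is imported as a black box; the remaining work is a bookkeeping exercise in assembling the three ingredients above. No step is a genuine obstacle once the machinery of \cref{sec:descent,sec:descentandequiv,sec:partequiv} is in hand; the only real subtlety is checking that the pulled-back domain is indeed localic, which follows from stability of localic morphisms under pullback and closure under composition.
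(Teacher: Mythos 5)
Your proposal is correct and follows the paper's own argument essentially verbatim: construct the localic morphism $L$ to $\Set[\NOtheory]$, pull back the open surjection $Q$ from the classifying topos of partial equivalence relations to obtain an open cover with localic domain, and then invoke the effective-descent theorem for open surjections via \cref{thm:fromeffdesctoreprgrpd}. The one subtlety you flag --- that the pulled-back domain is localic because localic morphisms are stable under pullback and closed under composition --- is exactly the point the paper also singles out, so there is nothing to add.
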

In \cref{sec:proof} we will see that the localic groupoid given by the above construction is precisely the one described in \cref{sec:syndecr}.
More abstractly, the theorem means that the functor $\Sh \colon \LocGrpd \to \Topos$ is essentially surjective.

\begin{remark}
 Since the geometric morphism $Q$ above is open (and even connected and locally connected), the representing localic groupoid is also open (indeed, connected and locally connected --- see \cref{rem:pullbackstable}). We will give a more hands-on proof of openness in \cref{sec:leftadjoints}.
\end{remark}

\begin{remark}\label{rem:multipleopencovers}
  A topos can have many non-equivalent open covers --- and therefore many non-isomorphic representing localic groupoids. Nonetheless, these are all equivalent in a suitable sense. See \cite[\S 7]{moerdijk1988}, though the notion of equivalence given there is weaker than necessary. We will discuss this equivalence in more detail in a later paper.
  
  The open cover $\overline{P}_N[\topos] \twoheadrightarrow \topos$ we consider is slightly different to the one built by Joyal and Tierney in \cite[Theorem VII.3.1]{joyal1984galois}. They instead use the open cover $\Set[\mathcal{TQ}_\N] \twoheadrightarrow \Set[\mathbb{O}_{>0}]$ from classifying topos of \emph{total} equivalence relations on $\N$ to the classifying topos of \emph{inhabited} objects. The reader is directed to \cite[Remark C5.2.8(c)]{Elephant} for more details.
  
  Other examples of open covers include the \emph{Diaconescu cover}, constructed in \cite{diaconescu1976} (see also \cite[Theorem C5.2.1]{Elephant} and \cite[Theorem IX.9.1]{SGL}).
\end{remark}

\section{Proof and applications of the syntactic description}\label{sec:proof} 

In this section, we prove that the localic groupoid $G^\theory$ described in \cref{sec:syndecr} is the representing localic groupoid yielded by the Joyal--Tierney method exposited in \cref{sec:joyaltierney}.  We then explore some applications of this explicit description.  In \cref{sec:leftadjoints,sec:isoclasses} we will observe that using the explicit description of $G^\theory$ we can give concrete proofs of the known facts that $G^\theory$ is an open localic groupoid and that its locale of isomorphism classes is the frame of sentences of the theory. Finally, \cref{sec:countable} we compare our localic representing groupoid with topological representing groupoids.

\subsection{Main proof}

We can now deduce our main result.  We repeat here, for the reader's convenience, the description of the localic groupoid $G^\theory$ described in \cref{sec:syndecr}.

\begin{theorem}\label{thm:main}
 Suppose $\theory$ is a geometric theory.
 Recall that the localic groupoid $G^\theory$ is defined as follows.
 \begin{itemize}
  \item The locale of objects $G^\theory_0$ is the classifying locale the propositional geometric theory $P[\theory]$, which is specified as follows.
  \begin{itemize}
  \item For each sort $X$ of $\theory$, there is a basic proposition ${[n \sim^X m]}$ for each $n,m \in \N$ together with the following axioms for each $n,m,\ell \in \N$:
  \begin{align*}
   {[n \sim^X m]} &\vdash {[m \sim^X n]}, \\
   {[n \sim^X m]} \land {[m \sim^X \ell]} &\vdash {[n \sim^X \ell]}.
   \end{align*}
  \item For each relation symbol $R \subseteq X^1 \times \dots \times X^k$ of $\theory$, and for each $n_1,\dots,n_k \in \N$ and $m_1,\dots,m_k \in \N$, we have a basic proposition ${[(n_1,\dots,n_k) \in R]}$ and axioms
  \begin{align*}
      {[(n_1,\dots,n_k) \in R]} \land {[n_1 \sim^{X^1} m_1]} \land \dots \land {[n_k \sim^{X^k} m_k]} \vdash {[(m_1,\dots,m_k) \in R]},  \\
      {[(n_1,\dots,n_k) \in R]} \vdash {[n_1 \sim^{X^1} n_1]} \land \dots \land {[n_k \sim^{X^k} n_k]}.
   \end{align*}
  \item For each axiom $\phi \vdash_{x_1\colon X^1,\dots,x_k\colon X^k} \psi $ of $\theory$, we add an axiom 
  \[\bigwedge_{i=1}^k {[n_i \sim^{X^i} n_i]} \wedge \phi_{n_1,\dots,n_k} \vdash \psi_{n_1,\dots,n_k}\]
  for each $n_1,\dots,n_k \in \N$, where $\phi_{n_1,\dots,n_k}$ and $\psi_{n_1,\dots,n_k}$ are obtained from $\phi$ and $\psi$ by replacing each free variable $x_i$ by a (fixed) natural number $n_i$, each quantifier $\exists x\colon X.\ \chi(x,\dots)$ by a join $\bigvee_{n_x \in \N} \chi(n_x,\dots)$, each subformula of the form $(y_1,\dots,y_\ell) \in R$ with ${[(y_1,\dots,y_\ell) \in R]}$, and each subformula of the form $x \mathrel{=}_X y$ with ${[x \sim^X y]}$.
  \end{itemize}
  
  \item The locale of morphisms $G^\theory_1$ is the classifying locale of a propositional geometric theory $P[\theory_{\cong}]$, where $\theory_{\cong}$ is a geometric theory with:
  \begin{itemize}
  \item for each sort $X$, relation symbol $R$ or axiom $\phi \vdash_{\vec{x}} \psi$ of $\theory$, two sorts $X_1,X_2$, relation symbols $R_1,R_2$ or axioms $\phi_{1,2} \vdash_{\vec{x}_{1,2}} \psi_{1,2}$,
  \item for each sort $X$ in $\theory$, a relation symbol $\alpha^X\subseteq X_1 \times X_2$ together with axiom 
  \[(x,y) \in \alpha^X \land (x',y') \in \alpha^X \land x =_{X_1} x' \mathrel{\dashv\vdash}_{x,y\colon X_1} (x,y) \in \alpha^X \land (x',y') \in \alpha^X \land y =_{X_2} y'\]
  and the axioms
  \begin{displaymath}
\begin{array}{r@{\hspace{1.5ex}}l@{\hspace{1.5ex}}l@{\quad}@{\quad}r@{}}
      & \vdash_{y\colon X_2} \exists x\colon X_1.\ (x,y) \in \alpha^X , &  \\
      & \vdash_{x\colon X_1} \exists y\colon X_2.\ (x,y) \in \alpha^X , & 
  \end{array}
  \end{displaymath}
  \item for each relation symbol $R$ in $\theory$, the axioms 
  \[\bigwedge_{i=1}^k (x_i,y_i) \in \alpha^{X^i} \land (x_1,\dots,x_k) \in R_1 \mathrel{\dashv\vdash}_{x_1,\dots,x_k,y_1,\dots,y_k} \bigwedge_{i=1}^k (x_i,y_i) \in \alpha^{X^i} \land (y_1,\dots,y_k) \in R_2\]
  \end{itemize}
  \item The source, target, identity and inversion maps as defined as follows:
  \begin{align*}
      s^*\colon {[(n_1,\dots,n_k) \in R]} &\mapsto {[(n_1,\dots,n_k) \in R_1]}.
  \end{align*}
  \begin{align*}
      t^*\colon {[(n_1,\dots,n_k) \in R]} &\mapsto {[(n_1,\dots,n_k) \in R_2]}.
  \end{align*}
  \begin{align*}
      e^*\colon {[(n_1,\dots,n_k) \in R_1]} &\mapsto {[(n_1,\dots,n_k) \in R]}, \\
      e^*\colon {[(n_1,\dots,n_k) \in R_2]} &\mapsto {[(n_1,\dots,n_k) \in R]}, \\
      e^*\colon {[\alpha^X(n) = m]} &\mapsto {[n \sim^X m]}.
  \end{align*}
  \begin{align*}
      i^*\colon {[(n_1,\dots,n_k) \in R_1]} &\mapsto {[(n_1,\dots,n_k) \in R_2]}, \\
      i^*\colon {[(n_1,\dots,n_k) \in R_2]} &\mapsto {[(n_1,\dots,n_k) \in R_1]} ,\\
      i^*\colon {[\alpha^X(n) = m]} &\mapsto {[\alpha^X(m) = n]}.
  \end{align*}
  \item When $G^\theory_1 \times_{G^\theory_0} G^\theory_1$ is presented by three copies of the propositions for $G^\theory_0$ together with propositions for the bijections $\beta^X \subseteq X_1 \times X_2$ and $\gamma^X \subseteq X_2 \times X_3$ for each sort, then the composition map is defined by
  \begin{align*}
      m^*\colon {[(n_1,\dots,n_k) \in R_1]} &\mapsto {[(n_1,\dots,n_k) \in R_1]}, \\
      m^*\colon {[(n_1,\dots,n_k) \in R_2]} &\mapsto {[(n_1,\dots,n_k) \in R_3]} ,\\
      m^*\colon {[\alpha^X(n) = p]} &\mapsto \bigvee_{m \in \N} {[\beta^X(n) = m]} \wedge {[\gamma^X(m) = p]}.
  \end{align*}
 \end{itemize}
 
 Then the topos of equivariant sheaves on the $G^\theory$ classifies $\theory$.
\end{theorem}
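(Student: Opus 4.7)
The plan is to realise $G^\theory$ as the specific open representing groupoid produced by the Joyal--Tierney construction of \cref{sec:joyaltierney} applied to $\topos \coloneqq \Set[\theory]$, and then to match each piece of that construction against the syntactic description in the statement. Concretely, I would take the localic geometric morphism $L \colon \Set[\theory] \to \Set[\NOtheory]$ from \cref{sec:classifying} together with the canonical open cover $Q \colon \Set[\PQtheory] \twoheadrightarrow \Set[\NOtheory]$ of \cref{sec:partequiv} and form the bipullback $\overline{P}_N[\Set[\theory]] \twoheadrightarrow \Set[\theory]$. By \cref{thm:fromeffdesctoreprgrpd}, this already yields an open representing groupoid $G$ for $\Set[\theory]$ with $\Sh(G_0) \simeq \overline{P}_N[\Set[\theory]]$, $\Sh(G_1) \simeq \overline{P}_N[\Set[\theory]] \times_{\Set[\theory]} \overline{P}_N[\Set[\theory]]$, and $\Sh(G_1 \times_{G_0} G_1)$ the analogous triple fibre product. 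What remains is to identify each of these bipullback toposes, together with the induced structure maps, with their syntactic counterparts.

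The key identification is carried out at the level of generalised points as in \cref{sec:points}. Using the iso-comma analysis of \cref{ex:limitsoftoposes}(\ref{ex:limitsoftoposes:enum:pullback}), an $\ftopos$-point of $\overline{P}_N[\Set[\theory]]$ consists of a $\theory$-model $M$, a $\PQtheory$-model (i.e.\ a partial equivalence relation $\sim^X$ on $\N$ for each sort $X$ of $\theory$), and an isomorphism of $\NOtheory$-reducts identifying each sort $X^M$ with the subquotient $\N/\!\sim^X$. Transporting the relations $R^M$ of $M$ along this isomorphism back to representatives in $\N$ yields propositions $[n \sim^X m]$ and $[(n_1,\dots,n_k) \in R]$ satisfying the well-definedness and total-support axioms of \cref{def:locale_of_objects}, and each axiom of $\theory$ translates sequent-by-sequent into the schema listed there, with existentials becoming joins over $\N$. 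This identifies $\overline{P}_N[\Set[\theory]]$ with $\Set[P[\theory]]$ and hence $G_0$ with $G^\theory_0$.

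An entirely parallel argument handles $G_1$ and $G_1 \times_{G_0} G_1$. Points of $\Sh(G_1)$ are pairs of $P[\theory]$-models together with a $\theory$-model isomorphism between the underlying $\theory$-models, whose graph is encoded by relations $\alpha^X$ satisfying exactly the axioms of \cref{def:locale_of_morphisms}; this gives $G_1 \cong G^\theory_1$, and similarly $G_1 \times_{G_0} G_1$ classifies $P[\theory_{\cong,\cong}]$. The structure maps of the Joyal--Tierney groupoid then act on points by projecting a pair of models onto its first or second component, inserting a model as an identity isomorphism, swapping the two components, and composing two isomorphisms; reading these transformations off generator by generator on the propositional presentation yields exactly the frame homomorphisms $s^\ast, t^\ast, e^\ast, i^\ast$ and $m^\ast$ of the statement, the join $\bigvee_{m \in \N}$ in the formula for $m^\ast$ being the representative-level expression of the relational composite of the graphs of two bijections.

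The main obstacle I anticipate is the translation in the second paragraph: verifying that the propositional theory whose $\ftopos$-models are $\theory$-models together with a subquotient-of-$\N$ presentation of each sort is presented on the nose by the axioms of $P[\theory]$. This requires an induction on the complexity of geometric formulae, and in particular a careful check that the substitution/quantifier clause of \cref{def:locale_of_objects} correctly computes the interpretation of a first-order geometric formula in terms of representatives. Once this is in hand, the remaining commuting squares required to recognise the structure maps are forced by the naturality of the iso-comma identifications, and the groupoid axioms for $G^\theory$ are inherited automatically from those of $G$.
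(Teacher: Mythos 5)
Your proposal is correct and follows essentially the same route as the paper: identify the open cover $\overline{P}_N[\Set[\theory]] \twoheadrightarrow \Set[\theory]$ with $Q' \colon \Set[P[\theory]] \to \Set[\theory]$ by computing the theory classified by the bipullback at the level of generalised points (this is the paper's \cref{lem:Pfortopos_and_Pfortheory_agree}), then apply \cref{thm:fromeffdesctoreprgrpd} and recognise $\Set[P[\theory_{\cong}]]$ and $\Set[P[\theory_{\cong,\cong}]]$ as the double and triple fibre products, reading off the structure maps generator by generator. The only presentational difference is that the paper verifies the identification of $\Set[P[\theory_{\cong}]]$ with $\Set[P[\theory]] \times_{\Set[\theory]} \Set[P[\theory]]$ by an explicit universal-property diagram chase through $\Set[\theory_{\cong}]$ and $\Set[2\cdot\PQtheorysing]$, where you argue directly on points; the substance is the same.
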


We will prove this by showing that $G^\theory$ is the groupoid obtained from the Joyal--Tierney construction we described in \cref{sec:joyaltierney}. We require one lemma before embarking on the main proof.

\begin{lemma}\label{lem:Pfortopos_and_Pfortheory_agree}
For each geometric theory $\theory$ with $N$ sorts, the commutative square
\[
\begin{tikzcd}
    \Set[P[\theory]] \ar{r} \ar{d}[']{Q'} & \Set[\PQtheory] \ar{d}{Q} \\
    \Set[\theory] \ar[swap]{r}{L} & \Set[\NOtheory]
\end{tikzcd}
\]
(where $L$ and $Q$ are defined as in \cref{sec:partequiv})
is a (bi)pullback.
\end{lemma}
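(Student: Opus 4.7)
The plan is to verify the bipullback directly by comparing categories of generalised points, following the approach of \cref{sec:points} and \cref{ex:limitsoftoposes}\cref{ex:limitsoftoposes:enum:pullback}. For an arbitrary topos $\ftopos$, the bipullback in question classifies triples $(M, P, \phi)$ consisting of a $\theory$-model $M$ in $\ftopos$, a $\PQtheory$-model $P$ in $\ftopos$, and an isomorphism $\phi\colon L(M) \xrightarrow{\sim} Q(P)$ of $\NOtheory$-models. On the other hand, by the universal property of the classifying topos, $\Hom(\ftopos, \Set[P[\theory]])$ is the category of $P[\theory]$-models in $\ftopos$. The task is therefore to construct a pseudonatural equivalence between these two categories and verify compatibility with the two projections into $\Set[\theory]$ and $\Set[\PQtheory]$.

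Unpacking the definition of $P[\theory]$ given in \cref{def:locale_of_objects}, a $P[\theory]$-model in $\ftopos$ consists of, for each sort $X$ of $\theory$, a PER $\sim^X$ on $\gamma^*(\N)$ (where $\gamma\colon\ftopos\to\Set$ is the unique geometric morphism) and, for each relation symbol $R\subseteq X^1\times\dots\times X^k$ of $\theory$, a subobject $[R]\hookrightarrow\gamma^*(\N)^k$ satisfying the compatibility axioms. The first observation is that the PERs $\sim^X$ together constitute precisely a $\PQtheory$-model $P$, and the compatibility axioms on $[R]$ assert exactly that $[R]$ descends along the subquotient cover $\{\vec n : \bigwedge_i [n_i\sim^{X^i}n_i]\}\twoheadrightarrow Q(P)_{X^1}\times\dots\times Q(P)_{X^k}$ to a well-defined subobject $\overline R$. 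Writing $N$ for the resulting interpretation with underlying sorts $Q(P)$ and relations $\overline R$, we will show that the translated axioms of \cref{def:locale_of_objects} hold in the $P[\theory]$-model if and only if the original axioms of $\theory$ hold in $N$. This produces a pseudonatural equivalence sending a $P[\theory]$-model to the triple $(N, P, \id_{Q(P)})$; its inverse transports the $\theory$-structure of $M$ along $\phi\colon L(M)\cong Q(P)$ to recover a $P[\theory]$-model. The compatibility of this equivalence with the projections is then immediate by construction.

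The main obstacle is the inductive verification that the syntactic translation $\phi\mapsto\phi_{n_1,\dots,n_k}$ of \cref{def:locale_of_objects} computes the semantic interpretation of $\phi$ in the subquotient model $N$. The essential case is the existential quantifier: one must check that replacing $\exists x\colon X.\chi(x,\dots)$ by $\bigvee_{n\in\N}\chi(n,\dots)$ (with $[n\sim^X n]$ forced by the compatibility axioms on any relation or equality involving $n$) correctly computes the image along the projection $Q(P)_X\times(\text{rest})\to(\text{rest})$. This hinges on the fact that $\gamma^*(\N)\supseteq\{n:[n\sim^X n]\}\twoheadrightarrow Q(P)_X$ is an epimorphism, so existentials over $Q(P)_X$ can be expressed as joins indexed by reflexive elements of $\gamma^*(\N)$; the remaining cases ($\wedge$, $\bigvee$, atomic relations, equality) are immediate from the definition of the subquotient interpretation. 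Once this semantic match is established, the equivalence of categories of points is straightforward, and pseudonaturality in $\ftopos$ follows since all constructions are defined by geometric operations preserved by inverse image functors.
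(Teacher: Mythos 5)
Your proposal is correct and follows essentially the same route as the paper: compute the theory classified by the bipullback via generalised points (pairs of a $\theory$-model and a $\PQtheory$-model together with an isomorphism between the underlying objects and the subquotients of $\N$), then identify this with $P[\theory]$ by transporting the $\theory$-structure along the isomorphism and pulling relations back to $\N$. The only difference is one of emphasis: you spell out the inductive verification that the syntactic translation of formulae (in particular the existential case, via the epimorphism $\{n : [n\sim^X n]\}\twoheadrightarrow Q(P)_X$) matches the semantic interpretation in the subquotient model, a step the paper compresses into the remark that the massaging of the theory is ``elementary''.
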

\begin{proof}
For clarity we will assume the theory $\theory$ has single sort, but this is easily generalised.
Recall that $Q$ can be understood as sending the partial equivalence relation $\sim$ on $\N$ to the corresponding subquotient $\N/{\sim}$. As described in \cref{ex:limitsoftoposes}, it is easy to compute a theory $\theory'$ that the bipullback topos classifies using the methods of \cite[\S 4.5]{vickers2007locales}. We see that $\theory'$ can be taken to be the theory of pairs of a model $M$ of $\theory$, a model $\sim$ of $\PQtheorysing$ and an isomorphism $L(M) \cong Q({\sim})$. Explicitly, this means a model $M$ of $\theory$, a partial equivalence relation $\sim$ on $\N$ and a bijection $\phi\colon M[X] \to \N/{\sim}$.

It is now elementary to massage $\theory'$ into a more convenient, equivalent form by transporting all relations and functions defined on terms of $M[X]$ along the bijection $\phi$ to give ones defined in $\N/{\sim}$. Then since the sort $M[X]$ is completely specified by $\N/{\sim}$ and the bijection, it can be removed from the theory. The resulting theory is essentially propositional. We can make it manifestly propositional by replacing relations on $\N/{\sim}$ with their preimages under $\N \twoheadrightarrow \N/{\sim}$ to give subsets $U_R$ of $\N^k$, which then can described using basic generators $[(n_1,\dots,n_k) \in U_R]$ for each $(n_1,\dots,n_k) \in \N^k$.
Thus, we have arrived at the theory $P[\theory]$ described in \cref{thm:main}. This theory now has no sorts and so it is manifestly propositional.

Note that the map $Q'$ simply undoes the above translations, obtaining a quotient of $\N/{\sim}$ from the propositional theory and sending this to the single sort $X$ of $\theory$. Relations are treated in the obvious way.
\end{proof}

\begin{proof}[Proof of \cref{thm:main}]
Again we assume $\theory$ has one sort for simplicity.
Recall that the geometric morphism $L \colon \Set[\theory] \to \Set[\Otheory]$, which a model of $\theory$ to its underlying object, is localic. By \cref{lem:Pfortopos_and_Pfortheory_agree} the open cover $\overline{P}_N[\Set[\theory]] \twoheadrightarrow \Set[\theory]$ used to construct the representing groupoid in \cref{sec:partequiv} may be taken to be $Q' \colon \Set[P[\theory]] \to \Set[\theory]$.

Now by applying \cref{thm:fromeffdesctoreprgrpd}, we know $\Set[\theory]$ is represented by the localic groupoid whose locale of objects is the classifying locale of $P[\theory]$ and whose source and target maps $s,t \colon G_1^\theory \rightrightarrows G_0^\theory$ are the locale morphisms for which the square
\[
\begin{tikzcd}
    \Sh(G_1^\theory) \ar{r}{\Sh(s)} \ar{d}[']{\Sh(t)} & \Set[P[\theory]] \ar{d}{Q'} \\
    \Set[P[\theory]] \ar[swap]{r}{Q'} & \Set[\theory]
\end{tikzcd}
\]
is a bipullback of toposes. We must now show that $\Set[P[\theory_{\cong}]]$ is this bipullback.

The theory $2 \cdot \Otheory$ classifies pairs of objects, and so we deduce from \cref{ex:limitsoftoposes} that $2 \cdot \Otheory$ is classified by the product $\Set[2 \cdot \Otheory] \cong \Set[\Otheory] \times \Set[\Otheory]$.  Similarly, $\Set[\PQtheorysing] \times \Set[\PQtheorysing]$ classifies the theory $2 \cdot \PQtheorysing$ of pairs of partial equivalence relations on $\N$.  Recall also from \cref{ex:limitsoftoposes} and \cref{rem:Tiso} that the theory $\theory_{\cong}$ of isomorphisms of $\theory$-models is classified by the bipullback
\[\begin{tikzcd}
	{\Set[\theory_{\cong}]} & {\Set[\theory]} \\
	{\Set[\theory]} & {\Set[\theory]}.
	\arrow["r", from=1-1, to=1-2]
	\arrow["u"', from=1-1, to=2-1]
	\arrow["{\id_{\Set[\theory]}}", from=2-1, to=2-2, swap]
	\arrow["{\id_{\Set[\theory]}}", from=1-2, to=2-2]
	\arrow["\lrcorner"{anchor=center, pos=0.125}, draw=none, from=1-1, to=2-2]
\end{tikzcd}\]
Using the universal property of $\Set[P[\theory]]$, we find that there are induced geometric morphisms ${s,t \colon \Set[P[\theory_{\cong}]] \rightrightarrows \Set[P[\theory]]}$ such that all the squares in the diagram

\[\begin{tikzcd}[column sep = tiny, row sep = small]
	&& {\Set[P[\theory]]} && {\Set[\PQtheorysing]} \\
	\\[1.8em]
	& {\Set[P[\theory_{\cong}]]} && {\Set[2 \cdot \PQtheorysing]} \\
	&& {\Set[\theory]} && {\Set[\Otheory]} \\
	{\Set[P[\theory]]} &[3em] & {\Set[\PQtheorysing]} \\
	& {\Set[\theory_{\cong}]} && {\Set[2 \cdot \Otheory]} \\
	\\[1.8em]
	{\Set[\theory]} && {\Set[\Otheory]}
	\arrow[from=3-2, to=6-2]
	\arrow["L"{pos=0.7}, from=4-3, to=4-5]
	\arrow[from=1-3, to=1-5]
	\arrow["Q", from=1-5, to=4-5]
	\arrow["Q'"', from=1-3, to=4-3]
	\arrow[from=6-4, to=4-5]
	\arrow["r"{pos=0.7}, from=6-2, to=4-3]
	\arrow[dashed, "s", from=3-2, to=1-3]
	\arrow[dashed, "t"', from=3-2, to=5-1]
	\arrow[from=6-4, to=8-3]
	\arrow["Q'"', from=5-1, to=8-1]
	\arrow["L", from=8-1, to=8-3, swap]
	\arrow["u"', from=6-2, to=8-1]
	\arrow[from=3-4, to=1-5]
 	\arrow[from=6-2, to=6-4]
    \arrow["Q", from=5-3, to=8-3, crossing over]
    \arrow[from=5-1, to=5-3, crossing over]
    \arrow[from=3-2, to=3-4, crossing over]
    \arrow[from=3-4, to=6-4, crossing over]
    \arrow[from=3-4, to=5-3, crossing over]
    \arrow["\lrcorner"{anchor=center, pos=0.125}, draw=none, from=5-1, to=8-3]
	\arrow["\lrcorner"{anchor=center, pos=0.125}, draw=none, from=3-2, to=6-4]
	\arrow["\lrcorner"{anchor=center, pos=0.125}, draw=none, from=1-3, to=4-5]
\end{tikzcd}\]

commute up to canonical isomorphisms.
Being induced by the maps 
\[r, u \colon \Set[\theory_{\cong}] \rightrightarrows \Set[\theory],\]
which send a model a $\theory_{\cong}$-model $M \cong N$ to, respectively, $M$ and $N$, we recognise that the locale morphisms $s, t \colon G_1^\theory \rightrightarrows G_0^\theory$ corresponding to the geometric morphisms $s, t \colon \Set[P[\theory_{\cong}]] \rightrightarrows \Set[P[\theory]]$ are exactly the ones described in the hypotheses of the theorem. (Note that we are abusing notation and not differentiating between a locale morphism and its corresponding geometric morphism between localic toposes.)

Our description of the localic groupoid is therefore precisely the representing groupoid found by the Joyal--Tierney method exposited in \cref{sec:joyaltierney} provided that the square

\[
\begin{tikzcd}
    \Set[P[\theory_{\cong}]] \ar{r}{s} \ar{d}[']{t} & \Set[P[\theory]] \ar{d}{Q'} \\
    \Set[P[\theory]] \ar[swap]{r}{Q'} & \Set[\theory]
\end{tikzcd}
\]
is a bipullback of toposes.  Firstly, we note that the square commutes up to isomorphism since it can be rewritten as
\[\begin{tikzcd}
	{\Set[P[\theory_{\cong}]]} && {\Set[P[\theory]]} \\
	& {\Set[\theory_{\cong}]} & {\Set[\theory]} \\
	{\Set[P[\theory]]} & {\Set[\theory]} & {\Set[\theory]}.
	\arrow["{\id_{\Set[\theory]}}"', from=3-2, to=3-3]
	\arrow["{\id_{\Set[\theory]}}", from=2-3, to=3-3]
	\arrow["r", from=2-2, to=2-3]
	\arrow["u"', from=2-2, to=3-2]
	\arrow["\cong"{description}, draw=none, from=2-2, to=3-3]
    \arrow["\lrcorner"{anchor=center, pos=0.125}, draw=none, from=2-2, to=3-3]
	\arrow["t"', from=1-1, to=3-1]
	\arrow["s", from=1-1, to=1-3]
	\arrow[from=1-1, to=2-2]
	\arrow[from=1-3, to=2-3]
	\arrow[from=3-1, to=3-2]
	\arrow["\cong"{description}, curve={height=12pt}, draw=none, from=1-1, to=3-2]
	\arrow["\cong"{description}, curve={height=-6pt}, draw=none, from=1-1, to=2-3]
\end{tikzcd}\]
    Now for any other (bi)cone
    \[
\begin{tikzcd}
    \topos \ar{r}{f} \ar{d}[']{g} \ar[draw = none]{rd}[description]{\cong} & \Set[P[\theory]] \ar{d}{Q'} \\
    \Set[P[\theory]] \ar{r}[']{Q'} & \Set[\theory]
\end{tikzcd}
\]
of the cospan, we will demonstrate that there is a diagram of toposes and geometric morphisms
\[\begin{tikzcd}[column sep = tiny, row sep = small]
	&&& {\Set[P[\theory]]} && {\Set[\PQtheorysing]} \\
	\topos \\
	&& {\Set[P[\theory_{\cong}]]} && {\Set[2 \cdot \PQtheorysing]} \\
	&&& {\Set[\theory]} && {\Set[\Otheory]} \\
	& {\Set[P[\theory]]} &[3em] & {\Set[\PQtheorysing]} \\
	&& {\Set[\theory_{\cong}]} && {\Set[2 \cdot \Otheory]} \\
	\\[1.8em]
	& {\Set[\theory]} && {\Set[\Otheory]}
	\arrow[from=3-3, to=6-3]
	\arrow[from=6-3, to=6-5]
	\arrow[from=4-4, to=4-6]
	\arrow[from=1-4, to=1-6]
	\arrow[from=1-6, to=4-6]
	\arrow[from=1-4, to=4-4]
	\arrow[from=6-5, to=4-6]
	\arrow[from=6-3, to=4-4]
	\arrow[from=6-5, to=8-4]
	\arrow[from=5-2, to=8-2]
	\arrow[from=8-2, to=8-4]
	\arrow[from=5-4, to=8-4]
	\arrow[from=6-3, to=8-2]
	\arrow[from=3-5, to=1-6]
	\arrow["\lrcorner"{anchor=center, pos=0.125}, draw=none, from=5-2, to=8-4]
	\arrow["\lrcorner"{anchor=center, pos=0.125}, draw=none, from=3-3, to=6-5]
	\arrow["\lrcorner"{anchor=center, pos=0.125}, draw=none, from=1-4, to=4-6]
	\arrow["g"', curve={height=18pt}, from=2-1, to=5-2]
	\arrow["f", curve={height=-18pt}, from=2-1, to=1-4]
	\arrow[curve={height=8pt}, dashed, from=2-1, to=6-3]
	\arrow[dotted, from=2-1, to=3-3]
    \arrow[from=3-3, to=3-5, crossing over]
    \arrow[from=3-3, to=1-4]
    \arrow[curve={height=-18pt}, dashed, from=2-1, to=3-5, crossing over]
    \arrow[from=3-5, to=5-4, crossing over]
	\arrow[from=3-3, to=5-2, crossing over]
    \arrow[from=5-2, to=5-4, crossing over]
    \arrow[from=3-5, to=6-5, crossing over]
\end{tikzcd}\]
where every square and triangle commutes up to canonical isomorphism.
\begin{enumerate}
    \item The geometric morphism $\begin{tikzcd}[column sep = small]
         &[-20pt] \topos \ar[dashed]{r} & \Set[\theory_{\cong}] &[-20pt]
    \end{tikzcd}$ is induced by the universal property of $\Set[\theory_{\cong}]$ as in the diagram
    
    \[\begin{tikzcd}
	{\topos} && {\Set[P[\theory]]} \\
	& {\Set[\theory_{\cong}]} & {\Set[\theory]} \\
	{\Set[P[\theory]]} & {\Set[\theory]} & {\Set[\theory]}.
	\arrow["{\id_{\Set[\theory]}}"', from=3-2, to=3-3]
	\arrow["{\id_{\Set[\theory]}}", from=2-3, to=3-3]
	\arrow["r", from=2-2, to=2-3]
	\arrow["u"', from=2-2, to=3-2]
	\arrow["\cong"{description}, draw=none, from=2-2, to=3-3]
    \arrow["\lrcorner"{anchor=center, pos=0.125}, draw=none, from=2-2, to=3-3]
	\arrow["g"', from=1-1, to=3-1]
	\arrow["f", from=1-1, to=1-3]
	\arrow[dashed, from=1-1, to=2-2]
	\arrow[from=1-3, to=2-3]
	\arrow[from=3-1, to=3-2]
	\arrow["\cong"{description}, curve={height=12pt}, draw=none, from=1-1, to=3-2]
	\arrow["\cong"{description}, curve={height=-6pt}, draw=none, from=1-1, to=2-3]
\end{tikzcd}\]

    \item The geometric morphism $\begin{tikzcd}[column sep = small]
         &[-20pt] \topos \ar[dashed]{r} & \Set[2 \cdot \PQtheorysing] &[-20pt]
    \end{tikzcd}$ is universally induced by the fact that $\Set[2 \cdot \PQtheorysing] \cong \Set[\PQtheorysing] \times \Set[\PQtheorysing]$.

    \item Finally, the geometric morphism $\begin{tikzcd}[column sep = small]
         &[-20pt] \topos \ar[dotted]{r} & \Set[P[\theory_{\cong}]] &[-20pt]
    \end{tikzcd}$ is induced by the universal property of $\Set[P[\theory_{\cong}]]$ as in the diagram
    \[\begin{tikzcd}
	\topos \\
	& {\Set[P[\theory_{\cong}]]} & {\Set[2 \cdot \PQtheorysing]} \\
	& {\Set[\theory_{\cong}]} & {\Set[2 \cdot \Otheory]}.
	\arrow[from=2-2, to=3-2]
	\arrow[from=2-2, to=2-3]
	\arrow[from=2-3, to=3-3]
	\arrow[from=3-2, to=3-3]
	\arrow[dotted, from=1-1, to=2-2]
	\arrow[""{name=0, anchor=center, inner sep=0}, dashed, curve={height=-24pt}, from=1-1, to=2-3]
	\arrow[""{name=1, anchor=center, inner sep=0}, dashed, curve={height=24pt}, from=1-1, to=3-2]
	\arrow["\cong"{description}, draw=none, from=2-2, to=3-3]
    \arrow["\lrcorner"{anchor=center, pos=0.125}, draw=none, from=2-2, to=3-3]
	\arrow["\cong"{description}, draw=none, from=2-2, to=0]
	\arrow["\cong"{description, pos=0.2}, draw=none, from=2-2, to=1]
\end{tikzcd}\]
\end{enumerate}

Thus, the (bi)cone factorises canonically as
\[\begin{tikzcd}
	\topos \\
	& {\Set[P[\theory_{\cong}]]} & {\Set[P[\theory]]} \\
	& {\Set[P[\theory]]} & {\Set[\theory]}.
	\arrow["t"', from=2-2, to=3-2]
	\arrow["s", from=2-2, to=2-3]
	\arrow["{Q'}", from=2-3, to=3-3]
	\arrow["{Q'}"', from=3-2, to=3-3]
	\arrow["\cong"{description}, draw=none, from=2-2, to=3-3]
	\arrow[dotted, from=1-1, to=2-2]
	\arrow[""{name=0, anchor=center, inner sep=0}, "f", curve={height=-24pt}, from=1-1, to=2-3]
	\arrow[""{name=1, anchor=center, inner sep=0}, "g"', curve={height=24pt}, from=1-1, to=3-2]
	\arrow["\cong"{description}, draw=none, from=2-2, to=0]
	\arrow["\cong"{description, pos=0.2}, draw=none, from=2-2, to=1]
\end{tikzcd}\]
We have elided the details that $\Set[P[\theory_{\cong}]]$ also satisfies the necessary universal property on 2-cells to be the bipullback, but this can be demonstrated in a similar fashion since the canonical morphism $\begin{tikzcd}[column sep = small]
         &[-20pt] \topos \ar[dotted]{r} & \Set[P[\theory_{\cong}]] &[-20pt]
    \end{tikzcd}$ was universally induced by a series of bilimits.

Finally, by demonstrating in an analogous manner that $\Set[P[\theory_{\cong,\cong}]]$ is equivalent to the wide bipullback $\Set[P[\theory]] \times_{\Set[\theory]} \Set[P[\theory]] \times_{\Set[\theory]} \Set[P[\theory]]$, we recognise that the composition map of our groupoid is described as in the hypotheses, thus completing the proof that the localic groupoid $G^\theory$ represents $\Set[\theory]$.
\end{proof}

\begin{example}
    As remarked below \cref{def:locale_of_objects}, when $\theory$ is a propositional theory, the theories $\theory$, $P[\theory]$ and $P[\theory_{\cong}]$ are all equivalent, and therefore have isomorphic classifying locales.  Hence, the syntactic groupoid $G^\theory$ as described in \cref{thm:main} is an example of a categorically discrete localic groupoid in the sense of \cref{ex:grpdexs}(\ref*{example1}) and so, as in \cref{exsrevisited}, the topos of equivariant sheaves $\Sh(G^\theory)$ is equivalent to the topos of sheaves on the classifying locale of $\theory$, as we would expect.
\end{example}

\subsection{Explicit left adjoints}\label{sec:leftadjoints}

We noted in \cref{sec:joyaltierney} that $G^\theory$ is an \emph{open} localic groupoid by general properties of the Joyal--Tierney construction. However, it is instructive to also see this directly in terms of an explicit left adjoint map.

\begin{lemma}\label{lem:adjoint_to_s}
 The frame map $s^*$ corresponding to the source morphism $s\colon G^\theory_1 \to G^\theory_0$ of $G^\theory$ has a left adjoint $s_!\colon \O G^\theory_1 \to \O G^\theory_0$ defined by
 \begin{equation}\label{eq:leftadjoint}
     \begin{aligned}
  s_! &\left(\bigwedge_{i \in I} [\vec{a}^i \in R^i_1] \wedge \bigwedge_{j \in J} [\vec{b}^j \in R^j_2] \wedge \bigwedge_{k \in K} [\alpha^{X^k}(c^k) = d^k] \right) \\ &{} = \bigwedge_{i \in I} [\vec{a}^i \in R^i] \wedge \bigvee_{\vec{y} \in \N^V} \bigwedge_{j \in J} [\vec{\pi}^j(\vec{y}) \in R^j] \wedge \bigwedge_{k \in K} [c^k \sim^{X^k} \pi^k(\vec{y})].
 \end{aligned}
 \end{equation}
 Here $V$ is the set of `distinct variables' represented by the $b^j_\ell$ or $d^k$ values. Explicitly, elements of $V$ are pairs $(n,X)$ where $n$ is natural number chosen from 
 \[\bigcup_{j \in J} \{b^j_1,\dots,b^j_\ell\} \cup \{d^k \mid k \in K\}\]
 and $X$ is the sort corresponding to the type of the variable in question.  The maps $\vec{\pi}^j$ and $\pi^k$ simply project out the values indexed by the appropriate variables.
\end{lemma}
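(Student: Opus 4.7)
The plan is to verify the formula in \cref{eq:leftadjoint} by establishing the adjunction $s_! \dashv s^*$ directly on the sublattice of \emph{formulas} (finite meets of basic generators), and then to extend by join-preservation.

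First, I would recall that every element of $\O G^\theory_1$ is a join of formulas of the shape appearing in the left-hand side of \cref{eq:leftadjoint}. I propose to define $s_!$ on such formulas by that formula and extend to arbitrary elements via $s_!\bigl(\bigvee_\alpha \phi_\alpha\bigr) := \bigvee_\alpha s_!(\phi_\alpha)$. To check that this descends to a well-defined map on the quotient frame $\O G^\theory_1$ presented by $P[\theory_{\cong}]$, it suffices to verify that $s_!(\phi_1) \le s_!(\phi_2)$ whenever $\phi_1 \vdash \phi_2$ is an axiom of $P[\theory_{\cong}]$. The relevant axioms split into three groups: the axioms imported from $P[\theory]$ on each of the two copies of the sorts and relations (handled by the corresponding axioms already available in $\O G^\theory_0$), the functionality and surjectivity axioms for $\alpha^X$ (handled by reindexing the join over $\vec{y}\in\N^V$), and the axiom stating that $R_1$ and $R_2$ correspond under $\alpha^X$ (handled similarly).

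Next I would verify the adjunction $s_!(\phi) \le \psi \iff \phi \le s^*(\psi)$ by checking the unit and counit inequalities on formulas; the general case then follows since both $s^*$ and $s_!$ preserve joins. For the counit $s_!(s^*(\psi)) \le \psi$, observe that $s^*(\psi)$ involves only copy-$1$ generators, so the index sets $J$ and $K$ and hence the set $V$ are empty, the join over $\vec{y} \in \N^V$ consists of a single term, and the formula collapses exactly to $\psi$. The nontrivial content lies in the unit inequality $\phi \le s^*(s_!(\phi))$. Starting with $\phi$ as on the left-hand side of \cref{eq:leftadjoint}, I would first use the axiom $[\vec{b}^j \in R^j_2] \vdash \bigwedge_\ell [b^j_\ell \sim^{X^j_\ell}_2 b^j_\ell]$ and the surjectivity of $\alpha^{X^j_\ell}$ to existentially introduce a natural number $m^j_\ell$ with $[\alpha^{X^j_\ell}(m^j_\ell) = b^j_\ell]$; the $R_1$--$R_2$ correspondence axiom then converts each $[\vec{b}^j \in R^j_2]$ into $[\vec{m}^j \in R^j_1]$. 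Likewise, for each $d^k$ I introduce a preimage $m^{d^k}$ under $\alpha^{X^k}$ and then use functionality of $\alpha^{X^k}$ to obtain $[c^k \sim^{X^k}_1 m^{d^k}]$ from $[\alpha^{X^k}(c^k) = d^k] \wedge [\alpha^{X^k}(m^{d^k}) = d^k]$. Collecting the preimages as values of a specific tuple $\vec{y} \in \N^V$ and forgetting the auxiliary $\alpha$-generators exhibits $\phi$ below one summand of $s^*(s_!(\phi))$.

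The main obstacle is the unit inequality, where the interplay between the abstract variables of $V$ and the natural numbers appearing as $b^j_\ell$ and $d^k$ must be tracked carefully; in particular, two occurrences of the same natural number in different positions of $\phi$ must be matched to the same variable in $V$ for the construction of $\vec{y}$ to go through. The well-definedness check, although mechanical, likewise requires case-by-case manipulation of the join over $\vec{y}$ for each axiom of $P[\theory_{\cong}]$.
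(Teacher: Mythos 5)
Your overall strategy is the same as the paper's: define the candidate map on finite meets of generators by the displayed formula, extend by joins, check well-definedness against the presentation of $\O G^\theory_1$, and then verify the two adjunction inequalities on basic opens (which suffices since both maps preserve joins). Your treatment of the counit and of the unit inequality --- deriving $[\vec{b}^j \sim \vec{b}^j]$ and $[d^k \sim_2 d^k]$, using surjectivity of $\alpha$ to introduce preimages, transporting $R_2$ to $R_1$ along $\alpha$, and matching repeated occurrences of the same natural number to a single variable of $V$ --- is exactly the paper's computation. (One small slip: to get $[c^k \sim_1 m]$ from $[\alpha^{X^k}(c^k) = d^k] \wedge [\alpha^{X^k}(m) = d^k]$ you need the \emph{injectivity} direction of the first $\alpha$-axiom, not functionality.)

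There is, however, a genuine gap in the well-definedness step. For a map that is only required to preserve joins it is \emph{not} sufficient to check $s_!(\phi_1) \le s_!(\phi_2)$ for each axiom $\phi_1 \vdash \phi_2$ of $P[\theory_{\cong}]$: since $s_!$ does not preserve finite meets, this does not yield $s_!(g \wedge \phi_1) \le s_!(g \wedge \phi_2)$ for an arbitrary basic open $g$, and these stabilised inequalities are genuinely part of the suplattice presentation of the frame --- this is precisely the content of the coverage theorem (\cite[\S 5.2]{abramsky1993quantales}) that the paper invokes. The failure of meet-preservation is not hypothetical here: the set $V$ couples the joins over $\N^V$, so for instance $s_!([n \sim_2 n'] \wedge [n \sim_2 m] \wedge [m \sim_2 \ell])$ quantifies both occurrences of $n$ by a single $y_n$, whereas $s_!([n \sim_2 n']) \wedge s_!([n \sim_2 m] \wedge [m \sim_2 \ell])$ quantifies them independently. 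Your case analysis therefore has to be carried out on the relations of the form $g \wedge \phi_1 \le g \wedge \phi_2$ with $g$ an arbitrary finite meet of generators (this is where the ``extra variable'' bookkeeping in, e.g., the transitivity axiom actually bites); the individual verifications then go through much as you describe, but as stated your sufficiency claim is false and leaves well-definedness unproved.
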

\begin{remark}
  The left adjoint of an open frame homomorphism is a pointfree incarnation of the direct image map of an open continuous function.
  It is then not too surprising that $s_!$ is related to existential quantification, since an object should intuitively lie in the image of $u \in \O G^\theory_1$ under the continuous map $s$ if there exists a morphism in $u$ which maps to it.
  From a logical perspective, $s_!$ sends conjunctions of logical formulae involving variables from both the domain and codomain sorts to formulae involving only variables from the domain sorts by existentially quantifying over those variables in the codomain sorts (here implemented as a join over $\N$).
\end{remark}

The bookkeeping necessary to define the left adjoint can obfuscate the core idea. To make this clearer we give a number of examples using the theory of total inhabited orders (see \cref{ex:linear_order}).
Recall that natural numbers encode (arbitrary) values of variables and do not represent their own values. Also note that in many-sorted theories the same natural number may encode different variables as long as the sorts of the variables differ. This is why the set $V$ defined above involves both the value and the type of each index.
\begin{itemize}
  \item Variables from domain sorts are left alone: ${[1 \mathrel{\le}_1 2]} \mapsto {[1 \le 2]}$.
  \item Variables from codomain sorts are `projected out': ${[1 \mathrel{\le}_2 2]} \mapsto \bigvee_{y_1,y_2 \in \N} {[y_1 \le y_2]}$.
  \item This also happens for isomorphisms: ${[\alpha(1) = 2]} \mapsto \bigvee_{x' \in \N} {[1 \sim x']}$.
  \item Different variables are quantified over independently:
  \begin{align*}
      {[1 \mathrel{\le}_2 2]} \wedge {[\alpha(1) = 4]} & \mapsto ({\textstyle\bigvee}_{y_1,y_2 \in \N} {[y_1 \le y_2]}) \wedge ({\textstyle\bigvee}_{y_4 \in \N} {[1 \sim y_4]}) \\
      &\, = {\textstyle\bigvee}_{y_1,y_2,y_4 \in \N} [y_1 \le y_2] \wedge [1 \sim y_4].
  \end{align*}
  \item Different instances of the same variable vary in lockstep: 
  \[\textstyle {[1 \mathrel{\le}_2 2]} \wedge {[\alpha(1) = 1]} \mapsto \bigvee_{y_1,y_2 \in \N} {[y_1 \le y_2]} \wedge {[1 \sim y_1]}.\]
\end{itemize}

\begin{proof}[Proof of \cref{lem:adjoint_to_s}]
As a left adjoint, the map $s_!$ preserves arbitrary joins, so it is completely determined by where it sends basic opens in $\O G^\theory_1$. These basic opens are given by finite meets of generators.
To avoid confusion we temporarily refer the suplattice map defined by \cref{eq:leftadjoint} on basic opens as $h\colon \O G^\theory_1 \to \O G^\theory_0$.

Recall that suplattices are complete join-semilattices and their homomorphisms are join-preserving maps.  Evidently, every frame is an example of a suplattice.

To see that this definition of $h$ indeed gives a suplattice homomorphism, we use the coverage theorem (see \cite[\S 5.2]  {abramsky1993quantales}), which asserts that, given a frame presentation with relations $\bigvee_\alpha \bigwedge_r S^r_\alpha \le \bigvee_\alpha \bigwedge_r T^r_\alpha$, there is a suplattice presentation for the underlying suplattice of the frame where the generators are formal finite meets of the frame generators and the relations are given by $\bigvee_\alpha (g \wedge \bigwedge_r S^r_\alpha) \le \bigvee_\alpha (g \wedge \bigwedge_r T^r_\alpha)$ for each suplattice generator $g$.

So to prove $h$ is well-defined we must show that, for every relation in the frame representation of $\O G^\theory_1$, the image of the corresponding relation obtained by taking a meet with finite meets of generators becomes an inequality in $\O G^\theory_0$.

\begin{itemize}
  \item For the relations involving only the basic relations $R_1^i$ from the domain copy this is immediate.
  \item Now we consider the relations only involving the codomain relations $R_2^j$ (including $\sim^{X_2}$ relations).
  \begin{itemize}
    \item The symmetry axiom ${[n \sim^{X_2} m]} \le {[m \sim^{X_2} n]}$ is easily seen to be preserved since a similar symmetry axiom holds for $\sim^X$ in $\O G^\theory_0$.
    \item The transitivity axiom for $\sim^{X_2}$ gives the relation 
          \[g \wedge {[n \sim^{X_2} m]} \wedge {[m \sim^{X_2} \ell]} \le g \wedge {[n \sim^{X_2} \ell]}.\]
          Applying the putative $h$ map to both sides we see that there is potentially an extra variable $m$ on the left-hand side. However, for every $m$ the transitivity axiom for $\sim^X$ in $\O G^\theory_0$ gives the desired inequality, and so by taking joins over all $m \in \N$ we conclude that $h$ preserves the symmetry axiom.
    \item We can then handle the other axioms involving $R_2^i$ relations in a very similar way.
  \end{itemize}
  \item Finally, we consider the axioms involving $\alpha^{X^k}$. These are proved in a similar way to above, but instead of using analogous axioms in $\O G^\theory_0$ to prove the inequalities, we use properties of $\sim^{X^k}$.
  \begin{itemize}
      \item Functionality and injectivity of $\alpha^X$ (the first axiom of $\alpha^X$ in \cref{thm:main}) can be reduced to transitivity (and symmetry) of $\sim^X$ in $\O G^\theory_0$.
      \item The claim for the relation 
            \[g \wedge {[\alpha^{X^k}(c) = d]} \le g \wedge {[c \sim^{X^k} c]} \wedge {[d \sim^{X^k} d]}\]
            also follows from transitivity and symmetry, as these give 
            \[{[c \sim^{X^k} y]} \le {[c \sim^{X^k} c]} \wedge {[y \sim^{X^k} y]}.\]
      \item Compatibility of $\alpha$ with other relations reduces to compatibility of these relations with $\sim$.
      \item It only remains to consider the surjectivity and totality axioms, which state $g \wedge {[y \sim^X y]} \le \bigvee_{x \in \N} g \wedge {[\alpha^X(x) = y]}$ and $g \wedge {[x \sim^X x]} \le \bigvee_{y \in \N} g \wedge {[\alpha^X(x) = y]}$, respectively. After applying $h$ we have valid inequalities since in the first case we can take $x = y$ in the join and in the second case we can take $y = x$ in the join.
  \end{itemize}
\end{itemize}

Thus, $h$ is a well-defined suplattice homomorphism. We now show that it really is the left adjoint to $s^*$. It is clear that $hs^* = \id_{\O G^\theory_0}$. We must prove that $s^*h \ge \id_{\O G^\theory_1}$. It suffices to show this on basic opens.

Let $g = \bigwedge_{i \in I} [\vec{a}^i \in R^i_1] \wedge \bigwedge_{j \in J} [\vec{b}^j \in R^j_2] \wedge \bigwedge_{k \in K} [\alpha^{X^k}(c^k) = d^k]$ be such a basic open. We will employ the shorthand ${[\vec{b}^j \sim \vec{b}^j]} = {[b^j_1 \sim b^j_1]} \wedge \dots \wedge {[b^j_\ell \sim^{X^j_2} b^j_\ell]}$.  Using the relations on the generators of $G_1^\theory$, we can show $\bigwedge_{j \in J} [\vec{b}^j \in R^j_2] \le [\vec{b}^j \sim \vec{b}^j]$. Similarly, we have that ${[\alpha^{X^k}(c^k) = d^k]} \le {[d^k \sim^{X^k_2} d^k]}$.

Then $[b \sim^{X_2} b] = \bigvee_{y_b \in \N} [\alpha(y_b) = b]$ and so (by grouping the joins over $y_b$ for the $b$'s corresponding to the same variables) we find that
\begin{align*}
  g &\le \bigwedge_{i \in I} [\vec{a}^i \in R^i_1] \wedge \bigvee_{\vec{y} \in \N^V} \bigwedge_{j \in J} ([\vec{b}^j \in R^j_2] \wedge [\vec{\alpha}(\vec{\pi}^j(\vec{y})) = \vec{b}^j]) \\ &\qquad {} \wedge \bigwedge_{k \in K} ([\alpha^{X^k}(c^k) \sim^{X^k_2} d^k] \wedge [\alpha^{X^k}(\pi^k(\vec{y})) = d^k] \wedge [d^k \sim^{X^k_2} d^k]).
\end{align*}
Now note that $[\vec{b}^j \in R^j_2] \wedge [\vec{\alpha}(\vec{\pi}^j(\vec{y})) = \vec{b}^j] \le [\vec{\pi}^j(\vec{y}) \in R^j_1]$ by the compatibility of $\alpha$ and $R^j$ and that $[\alpha^{X^k}(c^k) \sim^{X^k_2} d^k] \wedge [\alpha^{X^k}(\pi^k(\vec{y})) = d^k] \wedge [d^k \sim^{X^k_2} d^k] \le [c^k \sim^{X^k_1} \pi^k(\vec{y})]$ by injectivity of $\alpha$.
So we obtain that
\begin{align*}
g &\le \bigwedge_{i \in I} [\vec{a}^i \in R^i_1] \wedge \bigvee_{\vec{y} \in \N^V} \bigwedge_{j \in J} [\vec{\pi}^j(\vec{y}) \in R^j_1] \wedge \bigwedge_{k \in K} [c^k \sim^{X^k_1} \pi^k(\vec{y})].
\end{align*}
But the right-hand side of this inequality is precisely $s^*h(g)$ and hence we are done.
\end{proof}

With the explicit description of the left adjoint in hand, showing that the Frobenius reciprocity condition is satisfied is now trivial.  Explicitly, we have the following equalities for basic opens of $\O G_0^\theory$ and $\O G_1^\theory$.
\[
\begin{aligned}
  s_! &\left( s^\ast \left(\bigwedge_{\ell \in L} [\vec{e}^{\, \ell} \in R^{\ell}] \right) \wedge \bigwedge_{i \in I} [\vec{a}^i \in R^i_1] \wedge \bigwedge_{j \in J} [\vec{b}^j \in R^j_2] \wedge \bigwedge_{k \in K} [\alpha^{X^k}(c^k) = d^k] \right) \\
  & = s_!\left(\bigwedge_{\ell \in L} [\vec{e}^{\, \ell} \in R_1^\ell] \wedge \bigwedge_{i \in I} [\vec{a}^i \in R^i_1] \wedge \bigwedge_{j \in J} [\vec{b}^j \in R^j_2] \wedge \bigwedge_{k \in K} [\alpha^{X^k}(c^k) = d^k] \right) \\
  &{} = \bigwedge_{\ell \in L} [\vec{e}^{\, \ell} \in R^\ell] \wedge  \bigwedge_{i \in I} [\vec{a}^i \in R^i] \wedge \bigvee_{\vec{y} \in \N^V} \bigwedge_{j \in J} [\vec{\pi}^j(\vec{y}) \in R^j] \wedge \bigwedge_{k \in K} [c^k \sim^{X^k} \pi^k(\vec{y})] \\
  & = \bigwedge_{\ell \in L} [\vec{e}^{\, \ell} \in R^l] \wedge s_!\left(\bigwedge_{i \in I} [\vec{a}^i \in R^i_1] \wedge \bigwedge_{j \in J} [\vec{b}^j \in R^j_2] \wedge \bigwedge_{k \in K} [\alpha^{X^k}(c^k) = d^k]\right)
 \end{aligned}
\]

Therefore, the source map is open. Hence, we have given another proof for the following result.

\begin{proposition}
  The representing localic groupoid $G^\theory$ is an open groupoid.
\end{proposition}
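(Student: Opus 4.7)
The plan is to assemble the pieces that have already been established. By the preceding lemma, the frame homomorphism $s^*$ admits a left adjoint $s_!$ given explicitly by \cref{eq:leftadjoint}. The display computation immediately preceding the proposition verifies the Frobenius identity
\[ s_!(s^*(u) \wedge v) = u \wedge s_!(v) \]
whenever $u$ is a basic open of $\O G^\theory_0$ (a finite meet of generators) and $v$ is a basic open of $\O G^\theory_1$. Thus the bulk of the work has already been done.

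To finish, I would extend Frobenius reciprocity from basic opens to arbitrary opens by a join-continuity argument. Since every element of $\O G^\theory_0$ (resp.\ $\O G^\theory_1$) is a join of basic opens, and since both sides of the identity preserve arbitrary joins in the variable $v$ (because $s_!$ is a left adjoint and $s^*(u) \wedge (-)$ preserves joins by frame distributivity) and also in $u$ (because $s^*$ preserves joins and $(-) \wedge s_!(v)$ preserves joins), the identity on basic opens propagates to the identity on all opens. Hence $s$ is an open morphism of locales.

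Having shown that $s$ is open, openness of $t$ is immediate from $s \circ i = t$ together with the fact that $i$ is an isomorphism (so it is in particular an open map and compositions of open maps are open). Since a localic groupoid is open precisely when its source and target maps are both open, we conclude that $G^\theory$ is an open groupoid.

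The only potential obstacle is the verification of Frobenius reciprocity on basic opens, but this has been reduced to a direct formula manipulation in the preceding display, using the observation that $s^*$ acts on a basic open $\bigwedge_\ell [\vec{e}^{\,\ell} \in R^\ell]$ of $\O G^\theory_0$ by relabelling each relation symbol $R^\ell$ as $R_1^\ell$, so that the meet with $s^*(u)$ simply enlarges the index set $I$ inside the formula defining $s_!$. No genuinely new computation is required beyond what is already displayed.
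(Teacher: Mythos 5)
Your proposal is correct and follows essentially the same route as the paper: it invokes the explicit left adjoint $s_!$ from the preceding lemma, verifies Frobenius reciprocity via the displayed computation on basic opens, and deduces openness of $t$ from $s \circ i = t$. The join-continuity argument you supply to pass from basic opens to arbitrary opens is exactly the (standard) step the paper leaves implicit when it calls the verification ``trivial''.
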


\subsection{The isomorphism classes}\label{sec:isoclasses}

Recall that descent theory for toposes expresses the topos $\Set[\theory]$ as a colimit of groupoid diagram obtained by taking sheaves on the representing groupoid $G^\theory$.
Now, as a left adjoint, the localic reflection preserves colimits, and so the colimit of the diagram for $G^\theory$ in $\Loc$ gives the \emph{localic reflection} of $\Set[\theory]$. Since parallel 2-cells in $\Loc$ are equal, this colimit can be replaced by the coequaliser of the source and target maps.

Let $\pi_0(G^\theory)$ be the coequaliser
\[
\begin{tikzcd}
    G_1^\theory \ar[shift left]{r}{t} \ar[shift right]{r}[']{s} & G_0^\theory \ar[two heads]{r} & \pi_0(G^\theory)
\end{tikzcd}
\]
in $\Loc$, which we call the \emph{locale of isomorphism classes} of $G^\theory$.  Indeed, if $G^\theory$ is a spatial groupoid (see \cref{sec:countable} below) then $\pi_0(G^\theory)$ is the locale associated to the space of isomorphism classes of objects in $G_0^\theory$, endowed with the quotient topology.

We have shown that $\pi_0(G^\theory)$ is the localic reflection of $\Set[\theory]$ (see \cite[Lemma C5.3.7]{Elephant}). So $\O \pi_0(G^\theory)$ is isomorphic to the frame of subterminals in $\Set[\theory]$. The frame of subterminals of the classifying topos $\Set[\theory]$ is known to be the frame of \emph{sentences} of the theory, i.e.\ the frame whose opens are $\theory$-provable equivalence classes of formulae without free variables ordered by $\theory$-provability.

We can also obtain the localic reflection of $\Set[\theory]$ with our approach. By \cite[Proposition 1.3]{kock1989godement} the opens of the locale of isomorphism classes of an open localic groupoid are in bijection with the fixed points of the closure operator $s_!t^*$.
Note that in our case $s_!t^*$ sends $\bigwedge_{j \in J} [\vec{b}^j \in R^j]$ to $\bigvee_{\vec{y} \in \N^V} \bigwedge_{j \in J} [\vec{\pi}^j(\vec{y}) \in R^j]$.
A general element of $\O G^\theory_0$ is given by joins of the generators, which correspond to quantifier-free formulae in $\theory$, but with variables replaced by certain natural numbers. The order in the frame is given by provable entailment. The closure operator $s_! t^\ast$ takes a join over all possible natural numbers, which has the effect of existentially quantifying over the free variables. Thus, the fixed points of $s_! t^\ast$ then correspond to equivalence classes of formulae of $\theory$ with no free variables, as required.

It is also possible to use the results of \cite{manuell2022} to find an explicit presentation of the quotient locale $\pi_0(G^\theory)$ by generators and relations.

\subsection{The case of countable theories and topological groupoids}\label{sec:countable}

Under certain countability restrictions on the theory $\theory$, our construction can be understood to give a \emph{topological groupoid}. Some readers might find this preferable to working with locales.

\begin{definition}
 We say a geometric theory is \emph{countable} if it has a countable number of sorts, relations and axioms.
\end{definition}

\begin{proposition}
 For a countable geometric theory $\theory$, the localic groupoid constructed in \cref{thm:main} is spatial and thus arises from a topological groupoid.
\end{proposition}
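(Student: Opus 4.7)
The plan is to reduce the spatiality of the localic groupoid $G^\theory$ to the spatiality of its constituent locales $G^\theory_0$, $G^\theory_1$, and the fibre product $G^\theory_1 \times_{G^\theory_0} G^\theory_1$, and then to deduce this from a classical result on countable propositional geometric theories. By the explicit description given in \cref{thm:main}, each of these three locales is the classifying locale of a propositional geometric theory: $P[\theory]$, $P[\theory_{\cong}]$, and (via the composition clause) $P[\theory_{\cong,\cong}]$ respectively. When $\theory$ has only countably many sorts, relation symbols and axioms, each of these propositional theories has countably many basic propositions (indexed by the sorts, relation symbols, and tuples of natural numbers) and countably many axioms (each a geometric sequent involving only countable meets and joins over $\N$).

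The key input is the classical fact that every countable propositional geometric theory has enough $\Set$-valued models, so that its classifying locale is spatial. This is typically established via a Rasiowa--Sikorski / Baire category argument on the Lindenbaum--Tarski algebra: given any nonzero element $\phi$, one enumerates the countably many axiom instances and inductively constructs a completely prime filter containing $\phi$ that respects each of them, yielding a frame homomorphism to $\{0,1\}$ sending $\phi$ to $1$. (Equivalently, this can be obtained as the propositional shadow of Deligne's theorem or of Makkai-style conceptual completeness for countable geometric theories.) Intuitively, a point of $G^\theory_0$ corresponds to a countable set-based $\theory$-model together with a choice of partial surjection $\N \twoheadrightarrow M[X]$ for each sort, and the lemma says that there are enough such codings to realise every nonzero open.

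Applying this lemma to $P[\theory]$, $P[\theory_{\cong}]$, and $P[\theory_{\cong,\cong}]$ shows that the three relevant locales are spatial. The equivalence between spatial locales and sober topological spaces then promotes the structure morphisms of $G^\theory$ (source, target, identity, inverse, composition) to continuous maps between the corresponding sober spaces, and the groupoid axioms transfer from $\Loc$ to $\Top$ since they are purely equational. The main obstacle is the treatment of the fibre product: pullbacks of spatial locales are not in general spatial, so one cannot argue purely from spatiality of $G^\theory_0$ and $G^\theory_1$ in isolation. This is why it is essential to recognise $G^\theory_1 \times_{G^\theory_0} G^\theory_1$ as the classifying locale of the countable propositional theory $P[\theory_{\cong,\cong}]$ and apply the spatiality result directly there; once done, the localic fibre product coincides with its topological counterpart and the continuous composition map is well-defined, yielding the desired topological groupoid.
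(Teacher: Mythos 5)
Your proof is correct and takes essentially the same route as the paper: both reduce the claim to the fact that countably presented locales (equivalently, classifying locales of countable propositional geometric theories) are spatial --- the paper cites Heckmann for this, you sketch the underlying Rasiowa--Sikorski argument --- and both handle the fibre product by observing that it is again countably presented, the paper via closure of countably presented locales under finite limits and you via the explicit identification with the classifying locale of $P[\theory_{\cong,\cong}]$.
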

\begin{proof}
 Note that if a theory $\theory$ is countable, then the locale of objects and the locale of morphisms of the representing localic groupoid are countably presented.
 
 Assuming excluded middle, a countably presented locale is spatial (see \cite{heckmann2015}). Moreover, since countably presented locales are closed under finite limits, the domain of the composition map is also spatial, as required.
\end{proof}
\begin{remark}
 Equivariant sheaves on a topological groupoid are defined analogously to those on a localic groupoid. It is perhaps not obvious that the concepts coincide when they both apply, since pullbacks of products of locales and spaces might differ. Nonetheless they do agree, since if $X \to G_0$ is a local homeomorphism and $G_0$ is spatial, then so is $X$ (see \cite[Lemma C1.3.2]{Elephant}), and since local homeomorphisms are stable under pullback.
\end{remark}

When $\theory$ is a countable geometric theory, the topological groupoid obtained from \cref{thm:main} is the same representing topological groupoid as constructed by Forssell in \cite{forssell2012}, which we now recall.  

Let $\theory$ be a geometric theory with a conservative set of $\Set$-based models, and let $\mathcal{S}$ be an infinite `indexing' set for these models.  A model $M$ of $\theory$ is said to be $\mathcal{S}$-indexed if the underlying set of each sort is a subquotient of $\mathcal{S}$.  For a tuple $\vec{a} \in \mathcal{S}$, we will write $[\vec{a}]$ for its equivalence class in the subquotient.
Forssell shows in \cite[Theorem 5.1]{forssell2012} that the classifying topos $\Set[\theory]$ is equivalent to the topos of equivariant sheaves on the topological groupoid
\begin{equation}\label{diag:topgrpd}
\begin{tikzcd}
    I_\theory^\mathcal{S} \times_{M_\theory^\mathcal{S}} I_\theory^\mathcal{S} \ar{r}{m} &
    I_\theory^\mathcal{S} \ar[shift left = 3]{r}{t} \ar[shift right = 3]{r}{s} &
    M_\theory^\mathcal{S} \ar{l}[']{e}
    \end{tikzcd}
\end{equation}
constructed as follows.
    \begin{enumerate}
    \item The \emph{space of $\mathcal{S}$-indexed models} $M^\mathcal{S}_\theory$ is the set of all $\mathcal{S}$-indexed models of $\theory$ endowed with the \emph{logical topology for objects} --- the topology generated by subsets of form
    \[\class{\vec{a} \in R}_{M^\mathcal{S}_\theory} = \{\,M \in M^\mathcal{S}_\theory \mid [\vec{a}] \in R^{M}\,\},\]
    where $R$ is a relation of $\theory$ (including equality), $R^M$ is its interpretation in a model $M$, and $\vec{a}$ is a tuple of elements of $\mathcal{S}$.
    \item The space of arrows $I^\mathcal{S}_\theory$ is the set of all isomorphisms between models in $M^\mathcal{S}_\theory$
    endowed with the \emph{logical topology for arrows} --- the topology generated by sets of the form
    \[\left\llbracket\begin{matrix}
    \vec{a} \in R \\
    \vec{b} \mapsto \vec{c} \\
    \vec{d} \in R'
    \end{matrix}
    \right\rrbracket_{I_\theory^\mathcal{S}}
    = \left\{\begin{array}{c|c}
         &  [\vec{a}] \in R^{M}, \\
        M \xrightarrow{\alpha} M' \in I_\theory^\mathcal{S} & [\vec{b}] \in M, \, [\vec{c}] \in M', \,  \alpha([\vec{b}]) = [\vec{c}], \\
         & [\vec{d}] \in {R'}^{{M'}}
    \end{array}\right\}\]
    \item The maps $m$, $t$, $e$, $s$ and $i$ are defined in the obvious way.
\end{enumerate}

By \cite[Theorem 6.2.4]{makkaireyes} a countable theory $\theory$ has a conservative set of $\Set$-based models, and then by the downward L\"{o}wenheim-Skolem theorem these models can be taken to be countable.  Thus, $\theory$ has enough $\N$-indexed models.  We immediately recognise the locales $G_0^\theory$ and $G_1^\theory$ constructed in \cref{def:locale_of_objects} and \cref{def:locale_of_morphisms} as the locales of opens for, respectively, the logical topology for objects and arrows on the sets $M_\theory^{\N}$ and $I_\theory^{\N}$.  Explicitly, for each $\vec{n},\vec{n}',\vec{m},\vec{m}' \in \N$, we identify the basic open $\class{\vec{n} \in R}_{M^\N_\theory} \subseteq M^\N_\theory$ with the generator $[\vec{n} \in R]$ of $G_0^\theory$,
and similarly the basic open
\[
\left\llbracket\begin{matrix}
    \vec{n} \in R \\
    \vec{m} \mapsto \vec{m}' \\
    \vec{n}' \in R'
    \end{matrix}
    \right\rrbracket_{I_\theory^\N} \subseteq I^\N_\theory
    \text{ is identified with }
    [\vec{n} \in R_1] \land [\vec{n}' \in R'_2] \land \bigwedge_{m_i \in \vec{m}} [\alpha(m_i) = m'_i].
\]
Thus, when the theory $\theory$ is countable, the localic groupoid constructed in \cref{thm:main} coincides with the topological groupoid of $\N$-indexed models in \cref{diag:topgrpd}.

\begin{remark}
For a countable theory $\theory$, the representing topological groupoid for $\Set[\theory]$ constructed by Butz and Moerdijk in \cite{butz1998representing} is not directly comparable with the groupoid we build in \cref{thm:main}, instead deriving from one of the many other open covers of $\Set[\theory]$.  In summary, it is the groupoid obtained when, instead of considering the theory $\PQtheorysing$ of partial equivalence relations on $\N$ as we did, one takes the theory of partial equivalence relations on $\N$ where every equivalence class is infinite --- that is, the theory obtained by adding to $\PQtheorysing$, for each $n, \ell \in \N$, the axiom
\[[n \sim n] \vdash \bigvee \{[n \sim m_1] \land \dots \land [n \sim m_\ell]  \mid m_i \in \N \text{ with } m_1 < m_2 < \dots < m_\ell \}.\]
\end{remark}

\appendix
\section{Descent data and equivariant sheaves}\label{app:equivanddesc}

In this appendix we explicitly spell out the equivalence between the datum of a compatible $G_1$-action on a local homeomorphism $q \colon Y \to G_0$ and descent datum $(Y,\theta)$ for a localic groupoid $G$.  Thereby, we are free to using either definitions when discussing the topos of sheaves $\Sh(G)$.  The equivalence is merely a case of unravelling definitions, but since this can at times be fiddly, we include an exposition here.

Given a local homeomorphism $q \colon Y \to G_0$ with a compatible $G_1$-action $\beta \colon Y \times_{G_0} G_1 \to Y$, the corresponding descent datum is the pair $(Y,\theta_\beta)$ where $\theta_\beta$ is the induced map
\[
\begin{tikzcd}
    s^\ast(Y) \ar[bend right]{rdd} \ar[bend left]{rrd}{\beta} \ar[dashed]{rd}{\theta_\beta} && \\
    & t^\ast(Y) \arrow[dr, phantom, "\lrcorner", very near start] \ar{r} \ar{d} & Y \ar{d}{q} \\
    & G_1 \ar[swap]{r}{t} & G_0,
\end{tikzcd}
\]
where the outside square commutes by the axiom $q(\beta(y,g)) = t(g)$ of $\beta$.

In `point-set' notation, the locales $s^\ast(Y)$ and $t^\ast(Y)$ are the spaces
\begin{align*}
    s^\ast(Y)  & = \{(y,f) \in Y \times G_1\mid s(f) = q(y)\}, \\
    t^\ast(Y) & = \{(y,f) \in Y \times G_1 \mid t(f) = q(y)\},
\end{align*}
and $\theta_\beta$ is the map which sends $(y,f) \in s^\ast(Y)$ to $(\beta(y,f),f) \in t^\ast(Y)$.  We first show that $\theta_\beta$ does indeed define descent datum on $Y$.

The condition $e^\ast(\theta_\beta) = \id_Y$ asserts that the map $e^\ast(\theta_\beta)$ in the composite pullback diagram below is canonically the identity on $Y$.
\[
\begin{tikzcd}
    e^\ast s^\ast (Y) \ar{r} \ar{d}[']{e^\ast(\theta_\beta)} \arrow[dr, phantom, "\lrcorner", very near start] & s^\ast(Y) \ar{d}{\theta_\beta} \\
    e^\ast t^\ast (Y) \ar{r} \ar{d} \arrow[dr, phantom, "\lrcorner", very near start] & t^\ast(Y) \ar{d} \\
    G_0 \ar[swap]{r}{e} & G_1
\end{tikzcd}
\]

The space $e^\ast s^\ast (Y)$ is given by
\begin{align*}
  e^\ast s^\ast (Y) &= \{(x,y,f) \in G_0 \times Y \times G_1 \mid e(x) = f, \ s(f) = q(y)\} \\
\intertext{and similarly}
 e^\ast t^\ast (Y) &= \{(x,y,f) \in G_0 \times Y \times G_1 \mid e(x) = f, \ t(f) = q(y) \}.
\end{align*}
The map $e^\ast (\theta_\beta) \colon e^\ast s^\ast (Y) \to e^\ast t^\ast (Y)$ acts by
\[(x,y,f) \mapsto (x,\beta(y,f),f).\]
But since $x = s(e(x)) = s(f) = q(y)$, a triple $(x,y,f) \in e^\ast s^\ast (Y)$ is entirely determined by $y$.  Thus, there is a canonical isomorphism $e^\ast s^\ast (Y)\cong Y$ given by projecting onto the second component of the tuple.  Similarly, the same projection yields an isomorphism $ e^\ast t^\ast (Y) \cong Y$.  Since $f = e(q(y))$ for each $(x,y,f) \in e^\ast s^\ast (Y)$, we observe that $\beta(y,f) = \beta(y,e(q(y))) = y$.  Thus, we have a commuting triangle
\[
\begin{tikzcd}[row sep = small, column sep = small]
    & e^\ast s^\ast(Y) \ar{dd}{e^\ast(\theta_\beta)} \ar[sloped]{ld}{\sim}\\
    Y   & \\
    & \ar[sloped]{lu}{\sim} e^\ast t^\ast (Y),
\end{tikzcd}
\]
as required.

Now we turn to the condition that $m^\ast(\theta_\beta) = \pi_2^\ast(\theta_\beta) \circ \pi_1^\ast(\theta_\beta)$.  The spaces involved can be expressed as
\begin{alignat*}{6}
    \pi_1^\ast s^\ast(Y) &=  \{(y,f,g) \in Y \times G_1 \times G_1 && {} \mid s(\pi_1(f,g)) &&= s(f) &&= q(y), \ && t(f) = s(g) \},  \\
    \pi_1^\ast t^\ast(Y) &=  \{(y,f,g) \in Y \times G_1 \times G_1 && {} \mid t(\pi_1(f,g)) &&= t(f) &&= q(y), \ && t(f) = s(g) \}, \\
    \pi_2^\ast s^\ast(Y) &=  \{(y,f,g) \in Y \times G_1 \times G_1 && {} \mid s(\pi_2(f,g)) &&= s(g) &&= q(y), \ && t(f) = s(g) \}, \\
    \pi_2^\ast t^\ast(Y) &=  \{(y,f,g) \in Y \times G_1 \times G_1 && {} \mid t(\pi_1(f,g)) &&= t(g) &&= q(y), \ && t(f) = s(g) \}.
\end{alignat*}
Using the equations $s \circ m = s \circ \pi_1$ and $t \circ m = t \circ \pi_2$, and the commutativity of the pullback square
\[
\begin{tikzcd}
    G_1 \times_{G_0} G_1 \ar{r}{\pi_2} \ar{d}[']{\pi_1} \arrow[dr, phantom, "\lrcorner", very near start] & G_1 \ar{d}{s} \\
    G_1 \ar[swap]{r}{t}  & G_0,
\end{tikzcd}
\]
we conclude that $m^\ast s^\ast(Y) = \pi_1^\ast s^\ast(Y)$, $m^\ast t^\ast(Y) = \pi_2^\ast t^\ast(Y)$ and $\pi_1^\ast t^\ast(Y)= \pi_2^\ast s^\ast(Y)$. Thus, the equation $m^\ast(\theta_\beta) = \pi_2^\ast(\theta_\beta) \circ \pi_1^\ast(\theta_\beta)$ type-checks.

The map $\pi_1^\ast(\theta_\beta)$ is the map in the double pullback
\[
\begin{tikzcd}
    \pi_1^\ast s^\ast (Y) \ar{r} \ar{d}[']{\pi_1^\ast(\theta_\beta)} \arrow[dr, phantom, "\lrcorner", very near start] & s^\ast(Y) \ar{d}{\theta_\beta} \\
    \pi_1^\ast t^\ast (Y) \ar{r} \ar{d} \arrow[dr, phantom, "\lrcorner", very near start] & t^\ast(Y) \ar{d} \\
    G_1 \times_{G_0} G_1 \ar[swap]{r}{\pi_1} & G_1,
\end{tikzcd}
\]
and therefore acts by $(y,f,g) \mapsto (\beta(y,f),f,g)$.  Similarly, $\pi_2^\ast(\theta_\beta) \colon \pi_2^\ast s^\ast(Y) \to \pi_2^\ast t^\ast(Y) $ acts by $(y,f,g) \mapsto (\beta(y,g),f,g)$ and $m^\ast (\theta_\beta) \colon m^\ast s^\ast(Y) \to m^\ast t^\ast(Y)$ acts by $(y,f,g) \mapsto (\beta(y,m(f,g)),f,g)$.  Thus, we observe that
\begin{align*}
    (\pi_2^\ast(\theta_\beta) \circ \pi_1^\ast(\theta_\beta)) (y,f,g) &= \pi_2^\ast(\theta_\beta)  (\beta(y,f),f,g) \\
    &= (\beta(\beta(y,f),g),f,g) \\
    &= (\beta(y,m(f,g)),f,g) \\
    &= m^\ast(\theta_\beta)(y,f,g).
\end{align*}
Hence, the pair $(Y,\theta_\beta)$ indeed constitutes descent datum.

An equivariant map $f \colon Y \to Y'$ between spaces with respective $G_1$-actions $\beta$ and $\beta'$ also constitutes a morphism of descent data $f \colon (Y,\theta_\beta) \to (Y',\theta_{\beta'})$.  The required commutativity condition, $t^\ast(f) \circ \theta_\beta = \theta_{\beta'} \circ s^\ast(f)$, is forced by universal property of $t^\ast(Y')$ in the following commutative diagram.
\[\begin{tikzcd}[sep=small]
	{s^\ast(Y)} && {s^\ast(Y')} \\
	\\
	Y && {Y'} \\
	& {t^\ast(Y)} && {t^\ast(Y')} \\
	{G_0} && {G_0} \\
	& {G_1} && {G_1}
	\arrow["{s^\ast(f)}", from=1-1, to=1-3]
	\arrow["\beta"', from=1-1, to=3-1]
	\arrow["{\beta'}"', from=1-3, to=3-3]
	\arrow["f"{pos=0.6}, from=3-1, to=3-3]
	\arrow[from=3-1, to=5-1]
	\arrow[from=3-3, to=5-3]
	\arrow[from=5-1, to=5-3, equal]
	\arrow["t"', from=6-2, to=5-1]
	\arrow["t"', from=6-4, to=5-3]
	\arrow[from=6-2, to=6-4, equal]
	\arrow[from=4-4, to=6-4]
	\arrow[from=4-4, to=3-3]
	\arrow[from=4-2, to=3-1]
	\arrow["{t^\ast(f)}"{pos=0.25}, from=4-2, to=4-4, crossing over]
	\arrow["{\theta_\beta}", from=1-1, to=4-2, crossing over]
	\arrow["{\theta_{\beta'}}", from=1-3, to=4-4]
        \arrow[from=4-2, to=6-2, crossing over]
\end{tikzcd}\]

For the other direction, suppose we are given a descent datum $(Y,\theta)$. We then obtain a compatible $G_1$-action $\beta_\theta \colon s^\ast(Y) \to Y$ by taking $\beta_\theta$ to be the composite
\[
\begin{tikzcd}
     Y \times_{G_0} G_1 \cong s^\ast(Y) \ar{r}{\theta} & t^\ast(Y) \ar{r} & Y.
\end{tikzcd}
\]
Checking that $\beta_\theta$ is a legitimate $G_1$-action or that a morphism of descent data $f \colon (Y,\theta) \to (Y',\theta')$ yields an equivariant map $f \colon (Y,\beta_\theta) \to (Y',\beta_{\theta'})$ essentially amounts to the reverse of what we have done above and so we omit the details. Finally, note that the two correspondences are mutual inverses.

\bibliographystyle{abbrv}
\bibliography{references}

\begin{thebibliography}{10}

\bibitem{abramsky1993quantales}
S.~Abramsky and S.~Vickers.
\newblock Quantales, observational logic and process semantics.
\newblock {\em Math. Structures Comput. Sci.}, 3(2):161--227, 1993.

\bibitem{butz1998representing}
C.~Butz and I.~Moerdijk.
\newblock Representing topoi by topological groupoids.
\newblock {\em J. Pure Appl. Algebra}, 130(3):223--235, 1998.

\bibitem{TST}
O.~Caramello.
\newblock {\em Theories, sites, toposes: relating and studying mathematical
  theories through topos-theoretic `bridges'}.
\newblock Oxford University Press, 2018.

\bibitem{caramello2022}
O.~Caramello.
\newblock Fibred sites and existential toposes.
\newblock arXiv preprint
  \href{https://arxiv.org/abs/2212.11693}{arXiv:2212.11693}, 2022.

\bibitem{diaconescu1976}
R.~Diaconescu.
\newblock Grothendieck toposes have boolean points -- a new proof.
\newblock {\em Comm. Algebra}, 4:723--729, 1976.

\bibitem{forssell2012}
H.~Forssell.
\newblock Topological representation of geometric theories.
\newblock {\em Math. Log. Q.}, 6:380--393, 2012.

\bibitem{freyd}
P.~Freyd.
\newblock All topoi are localic or why permutation models prevail.
\newblock {\em J. Pure Appl. Algebra}, 46:49--58, 1987.

\bibitem{heckmann2015}
R.~Heckmann.
\newblock Spatiality of countably presentable locales (proved with the {B}aire
  category theorem).
\newblock {\em Math. Structures Comput. Sci.}, 25(7):1607--1625, 2015.

\bibitem{johnstone1977}
P.~T. Johnstone.
\newblock {\em Topos Theory}.
\newblock Academic Press, New York, 1977.

\bibitem{johnstone1980}
P.~T. Johnstone.
\newblock Open maps of toposes.
\newblock {\em Manuscripta Math.}, 31(1):217--247, 1980.

\bibitem{johnstone1981}
P.~T. Johnstone.
\newblock Factorization theorems for geometric morphisms, {I}.
\newblock {\em Cah. Topol. G\'eom. Diff\'er. Cat\'eg.}, 22(1):3--17, 1981.

\bibitem{Elephant}
P.~T. Johnstone.
\newblock {\em Sketches of an Elephant: A Topos Theory Compendium}.
\newblock Oxford University Press, Oxford, 2002.

\bibitem{joyalstreet93}
A.~Joyal and R.~Street.
\newblock Pullbacks equivalent to pseudopullbacks.
\newblock {\em Cah. Topol. G\'eom. Diff\'er. Cat\'eg.}, 34(2):153--156, 1993.

\bibitem{joyal1984galois}
A.~Joyal and M.~Tierney.
\newblock An extension of the {G}alois theory of {G}rothendieck.
\newblock {\em Mem. Amer. Math. Soc.}, 51(309), 1984.

\bibitem{kock1989godement}
A.~Kock.
\newblock A {G}odement theorem for locales.
\newblock {\em Math. Proc. Cambridge Philos. Soc.}, 105(3):463--471, 1989.

\bibitem{Lack2010}
S.~Lack.
\newblock A 2-categories companion.
\newblock In J.~C. Baez and J.~P. May, editors, {\em Towards Higher
  Categories}, pages 105--191. Springer, New York, 2010.

\bibitem{lindberg}
J.~Lindberg.
\newblock {\em Localic Categories of Models and Categorical Aspects of
  Intuitionistic Ramified Type Theory}.
\newblock PhD thesis, Stockholm University, 2020.

\bibitem{SGL}
S.~MacLane and I.~Moerdijk.
\newblock {\em Sheaves in Geometry and Logic: A First Introduction to Topos
  Theory}.
\newblock Springer, New York, 1994.

\bibitem{makkaireyes}
M.~Makkiai and G.~Reyes.
\newblock {\em First order categorical logic: model-theoretical methods in the
  theory of topoi and related categories}.
\newblock Springer-Verlag, Berlin Heidelberg, 1977.

\bibitem{manuell2022}
G.~Manuell.
\newblock Presenting quotient locales.
\newblock {\em Appl. Categ. Structures}, 31(28), 2023.

\bibitem{moerdijk1988}
I.~Moerdijk.
\newblock The classifying topos of a continuous groupoid {I}.
\newblock {\em Trans. Amer. Math. Soc.}, 310(2):629--668, 1988.

\bibitem{PicadoPultr}
J.~Picado and A.~Pultr.
\newblock {\em Frames and Locales: Topology without Points}.
\newblock Frontiers in Mathematics. Springer, Basel, 2012.

\bibitem{se2728171}
W.~Rether.
\newblock The \'{e}tale topos of a scheme is the classifying topos of...?
\newblock MathStackExchange: \url{https://math.stackexchange.com/q/2728171},
  2018-06-21.

\bibitem{mo84197}
B.~Steinberg.
\newblock Toposes (topoi) as classifying toposes of groupoids.
\newblock MathOverflow: \url{https://mathoverflow.net/q/84197}, 2015-11-30.

\bibitem{mo102652}
C.~Townsend.
\newblock Given a grothendieck topos, what does its localic groupoid look like?
\newblock MathOverflow: \url{https://mathoverflow.net/q/102652}, 2017-04-13.

\bibitem{vickers2007locales}
S.~Vickers.
\newblock Locales and toposes as spaces.
\newblock In M.~Aiello, I.~Pratt-Hartmann, and J.~Van~Benthem, editors, {\em
  Handbook of spatial logics}, pages 429--496. Springer, Dordrecht, 2007.

\bibitem{wrigley2023}
J.~Wrigley.
\newblock Some properties of internal locale morphisms externalised.
\newblock arXiv preprint
  \href{https://arxiv.org/abs/2301.00961}{arXiv:2301.00961}, 2023.

\end{thebibliography}

\end{document}